\newcommand{\bd}{\mathrm{bd}\,} 
\newcommand{\mydot}{\,\cdot\,}
\renewcommand{\linenopax}{}
\renewcommand{\ii}{\mathbbm{i}}  
\newcommand{\pole}{\ensuremath{\gw}\xspace}   
\newcommand{\simtset}{\{\simt_1,\ldots,\simt_N\}}
\newcommand{\scale}{\ensuremath{\ell}\xspace}     
\newcommand{\abscissa}{\ensuremath{D}\xspace}  
\newcommand{\Words}{\sW}     
\newcommand{\spam}{\gG}
\newcommand{\Mink}{\sM}
\newcommand{\Minkl}{\ensuremath{\Mink_\star}\xspace}
\newcommand{\Minku}{\ensuremath{\Mink^\star}\xspace}
\newcommand{\Minka}{\ensuremath{\cj{\Mink}}\xspace}
\newcommand{\polestrip}{\sP\xspace}
\newcommand{\SG}{\protect{\ensuremath{\sS\negsp[3]\raisebox{-1pt}{\sG}\negsp[3]}}\xspace}     
\newcommand{\bM}{\ensuremath{\sM^{\operatorname{out}}}\xspace} 
\newcommand{\Sf}{\ensuremath{S_{\negsp[7]f}}\xspace} 
\newcommand{\Wf}{\ensuremath{W_{\negsp[6]f}}\xspace} 
  \let\oldmarginpar\marginpar
  \renewcommand\marginpar[1]
\numberwithin{equation}{section}
\numberwithin{theorem}{section}
\numberwithin{figure}{section}
\begin{document}

  \title[Minkowski measurability of self-similar tilings and fractals (monophase case)]
    {Minkowski measurability results for self-similar tilings and fractals with monophase generators}

  \author{Michel L. Lapidus}
  \address{University of California, Department of Mathematics, Riverside, CA 92521-0135 USA}
  \email{\href{mailto:lapidus@math.ucr.edu}{lapidus@math.ucr.edu}}

  \author{Erin P. J. Pearse}
  \address{California Polytechnic State University, Department of Mathematics, San Luis Obispo, CA 93407-0403 USA}
  \email{\href{mailto:epearse@calpoly.edu}{epearse@calpoly.edu}}

  \author{Steffen Winter}
  \address{Karlsruhe Institute of Technology, Department of Mathematics, 76128 Karlsruhe, Germany}
  \email{\href{mailto:steffen.winter@kit.edu}{steffen.winter@kit.edu}}

  \begin{abstract}
    In a previous paper \cite{Pointwise}, the authors obtained tube formulas for certain fractals under rather general conditions. Based on these formulas, we give here a characterization of Minkowski measurability of a certain class of self-similar tilings and self-similar sets. Under appropriate hypotheses, self-similar tilings with simple generators (more precisely, monophase generators) are shown to be Minkowski measurable if and only if the associated scaling zeta function is of nonlattice type. Under a natural geometric condition on the tiling, the result is transferred to the associated self-similar set (i.e., the fractal itself). Also, the latter is shown to be Minkowski measurable if and only if the associated scaling zeta function is of nonlattice type.
  \end{abstract}

  \date{\textbf{\today}}
  \keywords{Complex dimensions, tube formula, scaling and integer dimensions, scaling and tubular zeta functions, inradius, self-similar set, self-similar tiling, fractal tube formula, fractal string, lattice and nonlattice case, Minkowski dimension, Minkowski measurability and content.}
  \subjclass[2010]{
    Primary: 11M41, 28A12, 28A75, 28A80, 52A39, 52C07,
    Secondary: 11M36, 28A78, 28D20, 42A16, 42A75, 52A20, 52A38
    }
  \thanks{\textbf{\today.} The work of MLL was partially supported by the US National Science Foundation under the research grant DMS-0707524, as well as by the Institut des Hautes \'Etudes Scientifiques (IH\'ES), in Bures-sur-Yvette, France, where he was a visiting professor while this paper was completed. }

\maketitle


\section{Introduction} \label{sec:intro}
Let $A$ be a bounded subset in Euclidean space $\bR^d$. For $0\le \alpha\le d$, we denote by
\linenopax
\begin{align} \label{eqn:M-content}
  \sM_\alpha(A):=\lim_{\ge\to 0^+}\frac{\lambda_d(A_\ge)}{\ge^{d-\alpha}}
\end{align}
the \emph{$\alpha$-dimensional Minkowski content} of $A$ whenever this limit exists (as a value in $[0,\infty]$). Here $\lambda_d$ denotes the Lebesgue measure in $\bR^d$ and 
\linenopax
\begin{align}\label{eqn:A_ge}
  A_\ge:=\{x\in \bR^d: d(x,A)\leq \ge\}
\end{align}
 is the \emph{$\ge$-parallel set} of $A$ (where $d(x,A):=\inf\{\|x-a\|:a\in A\}$ is the Euclidean distance of $x$ to the set $A$). $A$ is called \emph{Minkowski measurable (of dimension $\alpha$)}, if $\sM_\alpha(A)$ exists and satisfies $0<\sM_\alpha(A)<\infty$.
The question whether a set $A$ is Minkowski measurable of some dimension $\alpha$ has received considerable attention in the past. One motivation for studying this notion is the suggestion by Mandelbrot in \cite{Mandelbrot95}, to use it as a characteristic for the texture of sets \cite[\S{}X]{Mandelbrot82}. Mandelbrot called the number $1/\sM_\alpha(A)$ the \emph{lacunarity} of a set $A$ and observed that for sets in $\bR$ small lacunarity corresponds to spatial homogeneity of the set, i.e.\ small, uniformly distributed holes, while large lacunarity corresponds to clustering and large holes between different clusters; see also \cite{BedFi, Frantz, FGNT1} and \cite[\S12.1.3]{FGCD}. The notion of Minkowski content attracted even more attention in connection with the (modified) Weyl--Berry conjecture (as formulated in \cite{Lap:FD})%
\footnote{We refer to Berry's papers \cite{Berr1,Berr2} for the original Weyl--Berry conjecture and its physical applications. For early mathematical work on this conjecture and its modifications, see \cite{BroCa, Lap:FD, Lap:Dundee, LaPo1, FleVa,  LaPo2}, for example. See also \cite[\S12.5]{FGCD} for a more extensive list of later work.}
, proved for subsets of $\bR$ in 1993 by Lapidus and Pomerance \cite{LaPo1}. 
It establishes a relation between the spectral asymptotics of the Laplacian on a bounded open set and the Minkowski content of its boundary. A key step towards this result is the characterization of Minkowski measurability of compact subsets of \bR (or equivalently, of fractal strings) obtained in \cite{LaPo1} (and given a new proof in \cite{Falconer95} and more recently in \cite{RatajWinter2}). In particular, this led to a reformulation of the Riemann hypothesis in terms of an inverse spectral problem for fractal strings; see \cite{LaMa}.

In one dimension, the Minkowski content of a set is completely determined by the sequence of the lengths of its complementary intervals; cf.~\cite{LaPo1} or \cite{Falconer95}. In particular, the geometric arrangement of the intervals is irrelevant, in sharp contrast to the situation for the Hausdorff measure. Such sequences of lengths are nowadays known as \emph{fractal strings} and have become an independent object of study with numerous applications, e.g.\ in spectral geometry and number theory; see \cite{FGCD, FGNT1} and the references therein. In particular, they allowed the introduction and the development of a rigourous theory of complex dimensions.

One recent focus of research are generalizations of the theory to higher dimensions.
A natural analogue to a fractal string, which can also be viewed as a collection of disjoint open intervals (or as a collection of scaled copies of a generating interval), are the \emph{fractal sprays} introduced in \cite{LaPo2}.
A fractal spray $\sT=\{T_i\}$ is a collection of pairwise disjoint scaled copies $T_i$, $i\in \bN$ of a bounded open set $G$ in $\bR^d$. The associated scaling ratios -- arranged in nonincreasing order -- form a fractal string. The set $G$ is referred to as the \emph{generator} of $\sT$. Fractal sprays naturally arise in connection with iterated function systems. A tiling of the convex hull (or, more generally, of some feasible open set from the open set condition) of a self-similar set was constructed in \cite{Pe2, SST, GeometryOfSST}. This tiling consists of a countable collection of scaled copies of some generator and is thus a fractal spray. Such self-similar tilings can be used to decompose the $\ge$-parallel set of a self-similar set $F$ and to derive in this way a tube formula for $F$, i.e.\ a formula describing the volume $V(F_\ge)$ of the parallel sets $F_\ge$ as a function of the parallel radius $\ge$; we refer to Section~\ref{sec:terms} for more details. An essential step towards such tube formulas for self-similar sets are tube formulas for self-similar tilings (or, more generally, fractal sprays) $\sT$, which describe the inner parallel volume $V(T,\ge)$ of the union set $T:=\bigcup_i T_i$, that is the volume
\linenopax
\begin{align}\label{eqn:tiling-tube}
  V(T,\ge) := \gl_d\left(\{x \in T \suth \dist(x, T^c) \leq \ge\}\right),
  \qq \ge \geq 0,
 \end{align}
as a sum of residues of some associated zeta function which is a generating function of the geometry of the tiling.
Tube formulas for fractal sprays have first been obtained in \cite{Pe2, TFSST, TFCD, Pointwise}, generalizing the tube formulas for fractal strings in \cite{FGCD, FGNT1} to higher dimensions. The topic has been pursued in \cite{Demir, Demir2, Kocak1, Kocak2, DistanceZeta, Box-countingZeta}.
We refer to Section~\ref{sec:Self-similar-tilings} for more details; see also Remark~\ref{rem:fractal-boundaries} in particular.

One particular application of such formulas is the characterization of Minkowski measurability. In one dimension it is well known that a self-similar set is Minkowski measurable if and only if it is nonlattice; see \cite{FGCD} and \cite[\S8.4]{FGNT1}. (See also \cite{LaPo1, Lap:Dundee, Falconer95} for partial results, along with Remark~\ref{rem:nonlattice=nonlattice}.)
For subsets in $\bR^d$, $d\geq 2$, this is an open conjecture, see e.g.\ \cite[Conj.~3]{Lap:Dundee} and \cite[Rem.~12.19]{FGCD}. It was partially answered by Gatzouras~\cite{Gatzouras}, who proved that nonlattice self-similar sets in $\bR^d$ are Minkowski measurable. Therefore, it essentially remains to show the nonmeasurability in the lattice case, which, for subsets of $\bR$, can be proved using tube formulas (as in \cite[\S8.4.2]{FGCD}).
With these results in mind, in both \cite[Cor.~8.5]{TFCD} and \cite[Rem.~4.4 and \S8.4]{Pointwise}, the authors alluded to the fact that several results concerning Minkowski measurability follow almost immediately from the tube formulas; see also \cite[Rem.~10.6]{TFCD}. 
The purpose of this paper is to supply the missing arguments for 
the special case of self-similar tilings
with monophase generators and for self-similar sets possessing such tilings.
More specifically, we give precise geometric conditions, the most restrictive of them being the existence of a polynomial expansion for the inner parallel volume of the generator, under which the lattice-nonlattice dichotomy of Minkowski measurabilty carries over to higher dimensions.
The question of Minkowski measurability of self-similar sets and tilings with more general generators will be considered in \cite{Minko} using the general pointwise tube formulas derived in \cite{Pointwise}.

The paper is organized as follows.
In Section~\ref{sec:terms} we recall the construction of self-similar tilings and what it means for a generator of such a tiling to be monophase. In Section~\ref{sec:Exact-pointwise-tube-formulas}, we recall those tube formulas from \cite{Pointwise} needed for the proof of our main results on Minkowski measurability, which are then formulated and proved in Section~\ref{sec:Minkowski-measurability} for self-similar tilings and in Section~\ref{sec:fractals-themselves} for the associated self-similar sets.

\section{Self-similar tilings and their generators}
\label{sec:terms}

  All notations and notions used in the sequel are described in detail in \cite{Pointwise}; for the general theory of fractals strings and complex dimensions, we refer to \cite{FGCD}.

Let $\simtset$, $N \geq 2$ be an iterated function system (IFS), where each $\simt_n$ is a contractive similarity mapping of \bRd with scaling ratio $r_n \in (0,1)$. For $A \ci \bRd$, we write $\simt(A) := \bigcup_{n=1}^N \simt_n(A)$. The \emph{self-similar set} $\attr$ generated by the IFS $\simtset$ is the unique compact and nonempty solution of the fixed-point equation $\attr = \simt(\attr)$\,; cf.~\cite{Hut}. The fractal \attr is also called the \emph{attractor} of $\simtset$.
%
We study the geometry of the attractor by studying the geometry of a certain tiling of its complement, which is constructed via the IFS  as follows.



The construction of a self-similar tiling requires the IFS to satisfy the \emph{open set condition} and a \emph{nontriviality condition}.

\begin{defn}\label{def:OSC}
  A self-similar system $\simtset$ (or its attractor \attr) satisfies the \emph{open set condition} (OSC) if and only if there is a nonempty open set $O \ci \bRd$ such that
  \linenopax
  \begin{align}
    \simt_n(O) &\ci O, \q n=1, 2,\dots, N \label{eqn:def:OSC-containment} \\
    \simt_n(O) &\cap \simt_m(O) = \emptyset \text{ for } n \neq m.
      \label{eqn:def:OSC-disjoint}
  \end{align}
  In this case, $O$ is called a \emph{feasible open set} for $\simtset$ (or \attr); cf.~\cite{Hut, Falconer, BandtNguyenRao}.
\end{defn}

\begin{defn}\label{def:nontriv}
  A self-similar set \attr satisfying OSC is said to be \emph{nontrivial} if there exists a feasible open set $O$ such that
  \begin{equation}\label{eqn:nontriv}
    O\not \ci \simt(\cj{O})\,,
  \end{equation}
  where $\cj{O}$ denotes the closure of $O$; otherwise, \attr is called \emph{trivial}.
\end{defn}

This condition is needed to ensure that the set
  $O\setminus\simt(\cj{O})$ in Definition~\ref{def:self-similar-tiling} is nonempty.  It turns out that nontriviality is independent of the particular choice of the set $O$. It is shown in \cite{GeometryOfSST} that \attr is trivial if and only if it has interior points, which amounts to the following characterization of nontriviality:

\begin{prop}[{\cite[Cor.~5.4]{GeometryOfSST}}]
  \label{cor:OSC-dimension-d-implies-trivial}
  Let $\attr \ci \bRd$ be a self-similar set satisfying OSC. Then \attr is nontrivial if and only if \attr has Minkowski dimension 
  strictly less than $d$.
\end{prop}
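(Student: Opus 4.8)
The plan is to reduce the statement to the elementary relation between the ambient exponent $d$ and the contraction ratios $r_1,\dots,r_N$, via two facts that I would cite rather than reprove: the characterization recalled just above (from \cite{GeometryOfSST}) that $\attr$ is trivial if and only if $\inn\attr\neq\emptyset$, and the fact that, under OSC, the Minkowski dimension of $\attr$ equals its similarity dimension $s$, the unique solution of $\sum_{n=1}^N r_n^s=1$ (Moran--Hutchinson together with the standard covering estimate for the upper box dimension of self-similar sets; see e.g.\ \cite{Hut, Falconer}). Since $t\mapsto\sum_{n=1}^N r_n^t$ is continuous and strictly decreasing, the latter turns the Proposition into the equivalent dichotomy: $\attr$ is nontrivial if and only if $\sum_{n=1}^N r_n^d<1$.

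To prove this, I would first fix a bounded feasible open set $O$ (such a set exists whenever OSC holds; in the present setting one may take one of those arising in the tiling constructions of \cite{SST, GeometryOfSST}). Each $\simt_n$ scales Lebesgue measure by the factor $r_n^d$, and by \eqref{eqn:def:OSC-containment}--\eqref{eqn:def:OSC-disjoint} the images $\simt_n(O)$ are pairwise disjoint subsets of $O$, so
\[
  \Big(\sum_{n=1}^N r_n^d\Big)\gl_d(O)=\sum_{n=1}^N\gl_d(\simt_n(O))=\gl_d(\simt(O))\leq\gl_d(O)<\infty ;
\]
in particular $\sum_{n=1}^N r_n^d\leq 1$ always. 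The heart of the argument is that equality forces triviality: if $\sum_{n=1}^N r_n^d=1$, the display gives $\gl_d(O\setminus\simt(O))=0$, and since $O\setminus\simt(\cj{O})$ is an \emph{open} subset of this null set it must be empty, i.e.\ $O\subseteq\simt(\cj{O})$; as nontriviality does not depend on the choice of feasible open set, $\attr$ is then trivial. Conversely, if $\sum_{n=1}^N r_n^d<1$ then $s<d$, so $\dim_M\attr=s<d$, whence $\attr$ contains no nonempty open ball (balls have Minkowski dimension $d$ and $\dim_M$ is monotone under inclusion); thus $\inn\attr=\emptyset$, and $\attr$ is nontrivial by the cited equivalence. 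This proves the dichotomy, and with it the Proposition.

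I expect the only substantive external input to be the identity $\dim_M\attr=s$; the remainder is short measure theory combined with the already-quoted topological characterization. The point requiring a little care --- the main, if mild, obstacle --- is the interplay between a feasible open set and its closure; it is the reason for working with a \emph{bounded} $O$ above, and it is handled cleanly by noting that $O\setminus\simt(\cj{O})$ is open, so that a set of Lebesgue measure zero there is automatically empty, with no regularity assumption on $\bd O$. Were a fully self-contained proof desired, the genuinely nontrivial piece to supply would be the quoted equivalence ``trivial $\iff$ nonempty interior'' itself --- i.e.\ that a self-similar attractor of positive Lebesgue measure satisfying OSC has interior points --- which one obtains by iterating $\gl_d(O\setminus\simt(O))=0$ to get $\gl_d(O\setminus\attr)=0$ and using that $\attr$, being closed and of full measure in $O$, contains $O$.
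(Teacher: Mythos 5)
Your argument is correct. Note that the paper itself gives no proof of this Proposition: it is quoted from \cite[Cor.~5.4]{GeometryOfSST}, with only the preceding remark that it ``amounts to'' the characterization, established there, that \attr is trivial if and only if it has interior points. Your write-up reconstructs exactly that reduction --- the cited trivial-iff-interior equivalence plus $\dim_M\negsp[4]\attr = s$ under OSC, together with the measure-theoretic step showing $\sum_{n=1}^N r_n^d = 1$ forces $O \ci \simt(\cj{O})$, where the observation that $O\setminus\simt(\cj{O})$ is an \emph{open} null set (hence empty) is the right way to avoid any regularity assumption on $\bd O$ --- and the only facts you take on faith (existence of a bounded feasible open set, and independence of nontriviality from the choice of $O$) are asserted in the surrounding text and proved in \cite{GeometryOfSST}, so nothing essential is missing.
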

All self-similar sets considered here are assumed to be nontrivial, and the discussion of a self-similar tiling \tiling implicitly assumes that the corresponding attractor \attr is nontrivial and satisfies OSC.

Denote the set of all finite \emph{words} formed by the alphabet $\{1,\dots,N\}$ by
\linenopax
\begin{equation}\label{eqn:def:words}
  \Words := \bigcup_{k=0}^\iy \{1,\dots,N\}^k\,.
\end{equation}
For any word $w=w_1 w_2\dots w_n \in \Words$, let $r_w := r_{w_1}\cdot\ldots\cdot r_{w_n}$ and $\simt_w := \simt_{w_1} \circ \dots \circ \simt_{w_n}$. In particular, if $w \in \Words $ is the \emph{empty word}, then $r_w=1$ and $\simt_w=\mathrm{Id}$.

\begin{defn}\label{def:self-similar-tiling}(Self-similar tiling)
  Let $O$ be a feasible open set for $\simtset$. Denote the connected components of the open set $O \setminus \simt(\cj O)$ by $G_q, q \in Q$, where we assume $Q$ is finite. The sets $G_q$ are called the \emph{generators} of the tiling.
  Then the \emph{self-similar tiling} \tiling associated with the IFS $\{\simt_1,\ldots,\simt_N\}$ and $O$ is the set
  \linenopax
  \begin{equation}\label{eqn:def:self-similar-tiling}
    \tiling(O) := \{ \simt_w(G_q) \suth w \in \Words, q \in Q\}.
  \end{equation}
\end{defn}
We order the words $w^{(1)}, w^{(2)}, \ldots$ of \Words in such a way that the sequence $\sL=\{\scale_j\}_{j = 1}^\iy$ given by $\scale_j := r_{w^{(j)}}$, $j=1,2,\ldots$, is nonincreasing.%

The terminology ``self-similar tiling'' comes from the fact (proved in \cite[Thm.~5.7]{GeometryOfSST}) that $\tiling(O)$ is an \emph{open tiling} of $O$ in the following sense: The \emph{tiles} $\simt_w(G_g)$ in $\tiling(O)$ are pairwise disjoint open sets and the closure of their union is the closure of $O$, that is,
\linenopax
\[\cj{O}= \cj{\bigcup\nolimits_{q \in Q} \bigcup\nolimits_{w \in \Words} \simt_w(G_q)}\,.\]

 This clarifies that a self-similar tiling (with a single generator) is just a specially constructed fractal spray. (With more than one generator, it is, in fact, a collection of fractal sprays, each with the same fractal string $\sL = \{\scale_j\}_{j=1}^\iy$ and a different generator $\gen_q$, $q \in Q$. It may also be viewed as a fractal spray generated on the union set $\bigcup_{q\in Q} G_q$, as the connectedness of the generator is not a requirement for fractal sprays.)
 
\begin{remark}\label{rem:1-is-enough}
   For self-similar tilings with more than one generator, one can consider each generator independently, and a tube formula of the whole tiling is then given by the sum of the expressions derived for each single generator. Thus, there is no loss of generality in considering only the case of a single generator, which we will denote by $G$ in the sequel. See, however, Remark~\ref{rem:lattice-detail} and Remark~\ref{rem:nonlattice=nonlattice} for further discussion of this issue.
\end{remark}

\begin{defn}
  \label{def:Steiner-like}
  \label{def:monophase}
  For any bounded open set $\gen \ci \bRd$ let $\genir > 0$ be the inradius (the maximal radius of a metric ball contained in the set), and denote the volume of the inner $\ge$-parallel set $\gen_{-\ge}:=\{x\in \gen \suth d(x,\gen^c)\le \ge\}$ by $V(\gen,\ge)$, for any $\ge\geq 0$. 
  
  A \emph{Steiner-like representation} of $ V(\gen,\ge)$ is an expression of the form
  \linenopax
  \begin{align}\label{eqn:def-prelim-Steiner-like-formula}
    V(\gen,\ge) = \sum_{k=0}^{d} \crv_k(\gen,\ge) \ge^{d-k},
    \qq\text{ for } 0 < \ge \leq \genir,
  \end{align}
  where for each $k=0,1,\dots,d$, the coefficient function $\crv_k(\gen,\cdot)$ is a real-valued function on $(0,\genir]$ that is bounded on $[\ge_0, \genir]$ for every fixed $\ge_0 \in (0,\genir]$. Note that Steiner-like representations are not unique.
    \gen is said to be \emph{monophase}  if and only if  there is a Steiner-like representation for \gen in which the coefficients $\gk_k(G,\ge)$ are constant, i.e., independent of $\ge$. In this case, we write the coefficients in (2.8) as $\gk_k(G)$ instead of $\gk_k(G,\ge)$. In other words, $\gen$ is monophase, if, in the interval $[0,g]$, $V(\gen,\cdot)$ can be represented as a polynomial of degree at most $d$. Since one always has $\lim_{\ge \to 0^+} V(\gen,\ge) = 0$, it follows that $\crv_d(\gen)=0$ in the monophase case. Moreover, a monophase representation is unique in case it exists. See also Remark~\ref{rem:fractal-boundaries}.
\end{defn}

\begin{figure}
  \centering
  \scalebox{0.7}{\includegraphics{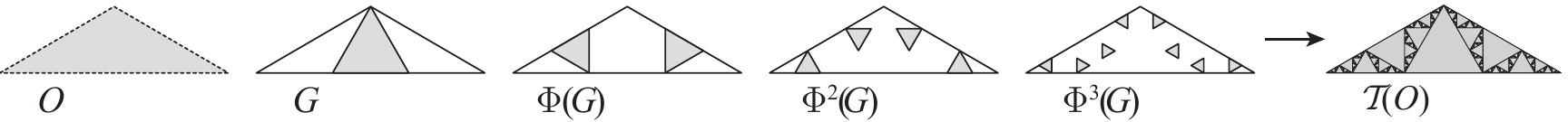}} \\
  \scalebox{0.80}{\includegraphics{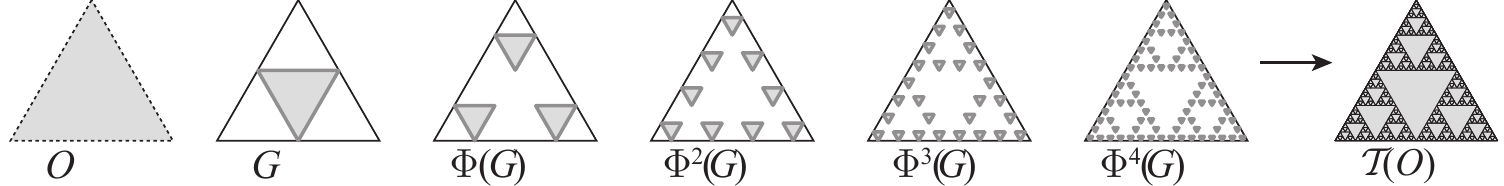}} \vstr[12]\\
  \scalebox{0.82}{\includegraphics{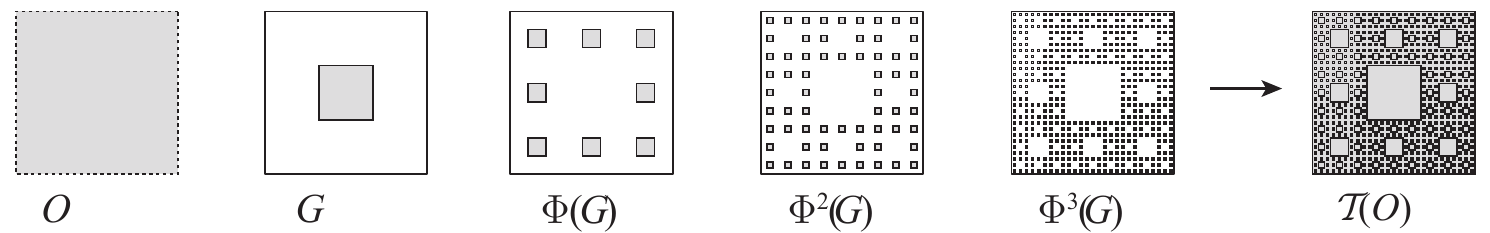}} \vstr[15]
  \caption{\captionsize From top to bottom: the Koch curve tiling, the Sierpinski gasket tiling, and the Sierpinski carpet tiling. In each of these examples, the set $O$ is the interior of the convex hull of \attr, and the generator $G$ is monophase. The Koch curve tiling does not satisfy the compatibility criterion (the hypothesis of Theorem~\ref{thm:compatibility}) but the other two examples do.}
  \label{fig:examples}
\end{figure}

\begin{remark}\label{rem:pluriphase}
Some examples of self-similar tilings associated to familiar fractal sets are shown in Figure~\ref{fig:examples}. In each case, there is a single monophase generator.
At the time of writing, there is no known characterization of monophase generators in terms of the self-similar system $\{\simt_n\}_{n=1}^N$. 
However, it is known from \cite{Kocak2} that a convex polytope in \bRd is monophase (with
Steiner-like function of class $C^{d-1}$) iff it admits an
inscribed $d$-dimensional Euclidean ball (i.e., a $d$-ball tangent to each facet). This includes regular polygons in \bRt and regular polyhedra in \bRd, as well as all triangles and higher-dimensional simplices. Furthermore, it was recently shown in \cite{Kocak2} that (under mild conditions), any convex polyhedron in \bRd ($d \geq 1$) is pluriphase, thereby resolving in the affirmative a conjecture made in \cite{TFSST,TFCD,Pointwise}. Recall from \cite{TFCD,Pointwise} that a set is said to be \emph{pluriphase} iff it admits a Steiner-like representation which is piecewise polynomial, i.e., that $(0,\genir)$ can be partitioned into finitely many intervals with $\crv_k(\gen,\ge)$ constant on each interval.
 We refer to \cite{Kocak2} for further relevant interesting results.
\end{remark}

\section{Zeta functions and fractal tube formulas}
\label{sec:Self-similar-tilings}
\label{sec:Exact-pointwise-tube-formulas}

From now on, let $\tiling = \tiling(O)$ be a self-similar tiling associated with the self-similar system $\{\simt_n\}_{n=1}^N$ and some fixed open set $O$. (We suppress dependence on $O$ when the context precludes confusion.) We refer to the fractal \attr as the self-similar set associated to \tiling.
  Without loss of generality, we continue to assume that there is only a single generator (see Remark~\ref{rem:1-is-enough}). We may also assume that the scaling ratios $\{r_n\}_{n=1}^N$ of $\{\simt_n\}_{n=1}^N$ are indexed in descending order, so that
  \linenopax
  \begin{align}\label{eqn:scaling-ratios-ordered}
    0 < r_N \leq \dots \leq r_2 \leq r_1 < 1.
  \end{align}
Note that there exist $\gs_-,\gs_+ \in \bR$ such that
  \linenopax
  \begin{align}\label{eqn:Moran-bounds}
    0 < \sum_{n=1}^N r_n^\gs < 1,
    \qq \text{for all } \q \gs_- < \gs < \gs_+.
  \end{align}

\begin{defn}\label{def:scaling-zeta-function}
  For a self-similar tiling \tiling, the \emph{scaling zeta function} \gzL is the meromorphic extension to all of \bC of the function defined by
  \linenopax
  \begin{align}\label{eqn:gzL-extended-to-C}
    \gzL(s) = \frac1{1 - \sum_{n=1}^N r_n^s},
    \qq \text{for \q} \gs_- < \Re(s) < \gs_+.
  \end{align}
\end{defn}

The reader familiar with \cite{FGCD}, \cite{TFCD}, or \cite{Pointwise} will notice that \eqref{eqn:gzL-extended-to-C} is the special case of the geometric zeta function of an (ordinary) fractal string when the string is self-similar; see \cite[Thm.~2.9]{FGCD} (or \cite[Thm.~4.7]{TFCD}).

\begin{defn}\label{def:tubular-zeta-fn}
  The \emph{tubular zeta function} of a self-similar tiling with a monophase generator is
  \linenopax
  \begin{align} \label{eqn:volume-zeta-split}
    \gzT(\ge,s)
    &= \gzT[\tail](\ge,s)
    = \frac{\ge^{d-s}\gzL(s)}{d-s} \left(\sum_{k=0}^{d-1} \frac{\genir^{s-k}}{s-k} (d-k)\crv_k(\gen)\right).
  \end{align}
\end{defn}

\begin{defn}\label{def:scaling-complex-dimensions}
  The set $\sD_\sL$ of \emph{scaling complex dimensions} of \tiling consists precisely of the poles of \eqref{eqn:gzL-extended-to-C}; that is,
  \linenopax
  \begin{align}\label{eqn:Moran}
    \sD_\sL := \{s \in \bC \suth \textstyle\sum_{n=1}^N r_n^s = 1\}.
  \end{align}
  We define the set $\DT$ of \emph{complex dimensions of the self-similar tiling} \tiling to be
  \linenopax
  \begin{align}\label{eqn:dimns-of-C}
    \DT := \sD_\sL \cup \{0,1,\dots,d\}.
  \end{align}
\end{defn}

The following definition is excerpted from \cite[\S5.3]{FGCD}.

\begin{defn}
  \label{def:screen}
  Let $D<\iy$ denote the abscissa of convergence of \gzL (see Remark~\ref{rem:similarity-dimn}), and choose $f:\bR \to (-\iy, \abscissa]$ to be a bounded Lipschitz continuous function.
  The \emph{screen} is $\Sf = \{f(t) + \ii t \suth t \in \bR\}$, the graph of $f$ with
  the axes interchanged. Here and henceforth, we denote the imaginary unit by $\ii := \sqrt{-1}$. 
  The screen is thus a vertical contour in \bC. The region to the
  right of the screen is the set \Wf, called the \emph{window}:
  \linenopax
  \begin{align}
    \label{eqn:window}
    \Wf &:= \{z \in \bC \suth \Re z \geq f(\Im z)\}.
  \end{align}
  For a given string \sL, we always choose $f$ so that \Sf avoids $\sD_{\sL}$ and such that \gzL can be meromorphic{ally} continued to an open neighborhood of \Wf. We also assume (as above) that $\sup f \leq \abscissa$, that is, $f(t) \leq \abscissa$ for every $t \in \bR$. The \emph{visible complex dimensions} are those scaling complex dimensions which lie in the window; this is denoted by
  \linenopax
  \begin{align}\label{eqn:visible-complex-dimensions}
    \DL(\Wf) := \DL \cap \Wf.
  \end{align}
  For the remainder of the paper, we will suppress dependence on $f$ and write simply $S=\Sf$ and $W=\Wf$ for the screen and window.
\end{defn}

In \cite[Thm.~4.1]{Pointwise}, a rather general pointwise tube formula (with and without error term) has been formulated for fractal sprays, which strengthens and extends the distributional tube formulas obtained in \cite{TFCD, TFSST} and generalizes the tube formulas for fractal strings in \cite{FGCD} to higher dimensions.  Various other versions (more specific and more explicit) have been derived from this general tube formula in \cite{Pointwise}, in particular for self-similar tilings. For this note, we only need a formula with error term formulated in \cite[Cor.~5.13]{Pointwise} for self-similar tilings with a single monophase generator; recall the definition of $T$ and $V(T,\ge)$ from \eqref{eqn:tiling-tube}.

\begin{theorem}[Fractal tube formula, with error term, {\cite[Cor.~5.13]{Pointwise}}]
  \label{thm:Fractal-tube-formula-with-error-term}
  Let \tiling be a self-similar tiling as defined above with a single monophase generator $\gen \ci \bRd$, let  $\sL = \{\scale_j\}_{j=1}^\iy$ be the associated fractal string.
  Let $S$ be a screen which avoids the integer dimensions $\{0,1,\dots,d\}$ and for which the visible poles of the tubular zeta function are simple (which implies that $\DL(W)$ and $\{0,1,\dots,d\}$ are disjoint). Then, for all $\ge \in (0,\genir]$, we have the following pointwise formula:
  \linenopax
  \begin{align}\label{eqn:Fractal-tube-formula-with-error-term-simplified}
    V(T,\ge)
    = \sum_{\pole \in \DL(W)} \negsp[10] c_\pole \ge^{d-\pole}
    + \sum_{k \in \{0,1,\dots,d\} \cap W} \negsp[20] c_k \ge^{d-k}
    + \R(\ge)\,,
  \end{align}
  where the coefficients $c_\pole$ and $c_k$ appear in the residues of \gzT at the visible complex and integer dimensions, respectively, and are explicitly given by
  \linenopax
  \begin{align}
    c_\pole &:= \frac{\res[\gw]{\gzL(s)}}{d-\pole}
    \sum_{k=0}^{d-1} \frac{\genir^{\pole-k}(d-k)}{\pole-k} \crv_k(\gen),
      && \text{for }\, \pole \in \sD_\sL(W), \text{ and}
      \label{eqn:self-similar-pointwise-tube-formula-coefficients-cw}\\
    c_k &:= \crv_k(\gen)\gzL(k),
      && \text{for }\, k \in \{0,1,\dots,d-1\}.
      \label{eqn:self-similar-pointwise-tube-formula-coefficients-ck}
  \end{align}
  %
  Furthermore, the error term in \eqref{eqn:Fractal-tube-formula-with-error-term-simplified} is 
  \linenopax
  \begin{equation} \label{eqn:pointwise-error}
    \R(\ge)
    = \frac{1}{2\gp\ii} \int_S \gzT(\ge,s) \,ds,
  \end{equation}
  and is estimated by $\R(\ge) = O(\ge^{d-\sup f})$ as $\ge \to  0^+$, where $f$ is the function defining $S$.
  %
\end{theorem}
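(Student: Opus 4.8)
The plan is to deduce \eqref{eqn:Fractal-tube-formula-with-error-term-simplified} from the general pointwise fractal tube formula with error term for fractal sprays, \cite[Thm.~4.1]{Pointwise} (the statement at hand being \cite[Cor.~5.13]{Pointwise}), by using the monophase hypothesis to bring the tubular zeta function into the explicit closed form \eqref{eqn:volume-zeta-split} and then reading off residues. The first point is the simplification from the monophase assumption: since each $\crv_k(\gen,\mydot)$ is constant, the functions $f_k(\ge):=\crv_k(\gen,\ge)-\crv_k(\gen)$ vanish identically, so the ``head'' part of the tubular zeta function is $0$ and $\gzT(\ge,s)=\gzT[\tail](\ge,s)=\frac{\ge^{d-s}\gzL(s)}{d-s}\sum_{k=0}^{d-1}\frac{\genir^{s-k}}{s-k}(d-k)\crv_k(\gen)$. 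To invoke the error-term form of \cite[Thm.~4.1]{Pointwise} one must check that $\gzT(\ge,\mydot)$ is languid along the screen $S$ (and along a suitable sequence of horizontal segments) with some polynomial exponent. Since $\gzT[\tail]$ is a product of $\ge^{d-s}$, the scaling zeta function $\gzL(s)=(1-\sum_{n=1}^N r_n^s)^{-1}$, and the rational functions $(d-s)^{-1}$ and $\genir^{s-k}(s-k)^{-1}$ — the latter being, for fixed $\ge$, bounded on vertical lines and on horizontal segments (the rational ones in fact decaying) — this reduces to the languidity of $\gzL$, the geometric zeta function of a self-similar fractal string. That languidity — indeed, strong languidity with exponent $0$, so that $|\gzL|$ is bounded on $S$ — is part of the self-similar framework; see \cite[Ch.~3]{FGCD} (and \cite{Pointwise}).

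With languidity in hand, \cite[Thm.~4.1]{Pointwise} applied to $\tiling$, viewed as a fractal spray, gives, for all $\ge\in(0,\genir]$,
\[
  V(T,\ge)=\sum_{\gw\in\DT\cap W}\res[s=\gw]{\gzT(\ge,s)}+\gl_d(\gen)\,\gzL(d)+\R(\ge),\qquad
  \R(\ge)=\frac{1}{2\gp\ii}\int_S\gzT(\ge,s)\,ds,
\]
where $\DT\cap W=\bigl(\sD_\sL\cup\{0,1,\dots,d\}\bigr)\cap W$ collects the only candidate poles of $\gzT(\ge,\mydot)$. It remains to evaluate the residues, which is where the hypothesis that the visible poles be simple enters: it forces $\sD_\sL(W)$ and $\{0,1,\dots,d\}$ to be disjoint, so each candidate pole is simple and the residues add without interference. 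At $\gw\in\sD_\sL(W)$ the only singular factor of $\gzT[\tail]$ is $\gzL$, whence $\res[s=\gw]{\gzT(\ge,s)}=\res[s=\gw]{\gzL(s)}\cdot\frac{\ge^{d-\gw}}{d-\gw}\sum_{k=0}^{d-1}\frac{\genir^{\gw-k}}{\gw-k}(d-k)\crv_k(\gen)=c_\gw\ge^{d-\gw}$, the coefficient \eqref{eqn:self-similar-pointwise-tube-formula-coefficients-cw}. At $k\in\{0,1,\dots,d-1\}\cap W$ the only singular factor is $(s-k)^{-1}$ in the $k$-th summand, whence $\res[s=k]{\gzT(\ge,s)}=\frac{\ge^{d-k}\gzL(k)}{d-k}(d-k)\crv_k(\gen)=c_k\ge^{d-k}$, the coefficient \eqref{eqn:self-similar-pointwise-tube-formula-coefficients-ck}. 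Finally $s=d$ lies in $W$, because the abscissa of convergence $\abscissa$ of $\gzL$ equals the similarity dimension, which is strictly less than $d$ by Proposition~\ref{cor:OSC-dimension-d-implies-trivial} (the attractor $\attr$ being nontrivial), while $\sup f\le\abscissa$; the simple pole of $(d-s)^{-1}$ there yields $\res[s=d]{\gzT(\ge,s)}=-\gzL(d)\sum_{k=0}^{d-1}\genir^{d-k}\crv_k(\gen)=-\gzL(d)\,V(\gen,\genir)=-\gl_d(\gen)\,\gzL(d)$, the last equality because $\gen_{-\genir}=\gen$ (no point of $\gen$ lies farther than the inradius from $\gen^c$, so the inner $\genir$-parallel set exhausts $\gen$). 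This exactly cancels the term $\gl_d(\gen)\gzL(d)$ — consistently with $\crv_d(\gen)=0$ in the monophase case — and assembling the surviving residues yields \eqref{eqn:Fractal-tube-formula-with-error-term-simplified}.

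It remains to estimate $\R(\ge)=\frac{1}{2\gp\ii}\int_S\gzT(\ge,s)\,ds$. Writing $s=f(t)+\ii t$ on $S$, one has $|\ge^{d-s}|=\ge^{\,d-f(t)}\le\ge^{\,d-\sup f}$ for $0<\ge<1$ (which suffices, as only the behavior as $\ge\to0^+$ is at issue), since $d-f(t)\ge d-\sup f>0$. The remaining factor $\gzL(s)(d-s)^{-1}\sum_{k=0}^{d-1}(d-k)\crv_k(\gen)\genir^{s-k}(s-k)^{-1}$ is absolutely integrable in $t$ over $\bR$: $\gzL$ is bounded on $S$ (which avoids $\sD_\sL$) by the languidity above, each $|\genir^{s-k}|=\genir^{\,f(t)-k}$ is bounded since $f$ is bounded, and $|d-s|^{-1}|s-k|^{-1}$ decays like $|t|^{-2}$ because $|d-s|,|s-k|\ge|t|$. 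Hence $|\R(\ge)|\le C\,\ge^{\,d-\sup f}$ with $C$ a finite constant depending only on $S$ and $\gen$, i.e.\ $\R(\ge)=O(\ge^{d-\sup f})$ as $\ge\to0^+$.

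The residue computations and the final estimate are essentially bookkeeping. The substantive point — and the main obstacle in a from-scratch proof — is the languidity of $\gzL$ along the screen: a uniform upper bound for $\gzL$ on $S$ amounts to a uniform lower bound for $|1-\sum_{n=1}^N r_n^s|$ there, which is transparent in the lattice case (where $\sD_\sL$ lies on finitely many vertical lines, so one may run $S$ between two of them) but in the nonlattice case requires care in choosing $S$ to stay boundedly away from the complex dimensions accumulating toward the line $\Re s=\abscissa$; together with the horizontal-segment estimates needed to close the contour in the proof of \cite[Thm.~4.1]{Pointwise}, both are taken over from \cite[Ch.~3]{FGCD} and \cite{Pointwise}.
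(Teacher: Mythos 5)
The paper offers no proof of this theorem: it is imported verbatim from \cite[Cor.~5.13]{Pointwise}, so there is no internal argument to compare yours against. Your derivation --- specializing the general pointwise tube formula with error term of \cite[Thm.~4.1]{Pointwise}, noting that the monophase hypothesis kills the ``head'' of the tubular zeta function so that $\gzT=\gzT[\tail]$, reading off the simple-pole residues at $\sD_\sL(W)$ and at the visible integers (with the residue at $s=d$ cancelling the $\gl_d(\gen)\gzL(d)$ term, consistent with $\crv_d(\gen)=0$), and bounding the screen integral using the boundedness of \gzL on $S$ and the quadratic decay of the rational factors --- is correct and is essentially the route taken in the cited source, with the genuinely nontrivial ingredient (languidity of \gzL along a suitable screen) correctly identified and properly attributed to \cite[Ch.~3]{FGCD}.
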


\begin{remark}\label{rem:expork}
  An exposition of some of the main results of \cite{TFCD, Pointwise, Pe2, SST, GeometryOfSST, Minko} (including Theorem~\ref{thm:Fractal-tube-formula-with-error-term}) can be found in \cite{TFSST} and \cite[\S13.1]{FGCD}.
\end{remark}


\section{Minkowski measurability results for self-similar tilings} 
\label{sec:Minkowski-measurability}

Now we are going to discuss the question of Minkowski measurability for self-similar tilings. We start by clarifying the notion of Minkowski content for such tilings.  As in \eqref{eqn:tiling-tube}, we write $V(T,\ge)$ for the inner parallel volume of the open set $T$, where $T:=\bigcup_{R\in\sT} R$ is the union set of the tiles of $\sT$ (which are open sets by definition). 


\begin{defn}[Minkowski content and dimension]
  \label{def:Minkowski-content-and-dimension}
  Let \tiling be a self-similar tiling (or a fractal spray) in $\bR^d$ and let $0\le\alpha\le d$. If the limit
  \linenopax
  \begin{align}\label{eqn:Minkowski-content}
    \Mink_\ga(\tiling)
    := \lim_{\ge \to 0^+} \ge^{-(d-\ga)} V(T, \ge),
  \end{align}
  exists (as a value in $[0,\iy]$), then this number is called the \emph{\ga-dimensional Minkowski content} of \tiling.
  Similarly as for sets, $\tiling$ is said to be \emph{Minkowski measurable (of dimension $\alpha$)}, if $\sM_\alpha(\tiling)$ exists and satisfies $0<\sM_\ga(\tiling)<\infty$.
  Furthermore, the \emph{Minkowski dimension} of \tiling is the real number $\dim_M \negsp[4]\tiling \in[0,d]$ given by
  \linenopax
  \begin{align}\label{eqn:Minkowski-dimension}
    \dim_M \negsp[4]\tiling
    :=& \inf\{\ga \geq 0 \suth \Mink_\ga(\tiling) = 0\}.
  \end{align}
  \linenopax

  %
  %
  %
\end{defn}


It is obvious that Minkowski measurability of dimension $\alpha$ implies that $\dim_M \negsp[4]\tiling=\alpha$. In analogy with the average Minkowski content for sets or fractal strings, see e.g.~\cite[Def.~8.29]{FGCD}, the next definition will be useful in the case of lattice self-similar tilings, when the Minkowski content does not exist.

\begin{defn}[Average Minkowski content]
  \label{def:Average-Minkowski-content}
  Let \tiling be a self-similar tiling (or fractal spray) in $\bR^d$ and  $0\leq\ga \leq d$. If the limit
  \linenopax
  \begin{align}\label{eqn:alpha-average-Minkowski-content}
    \Minka_\ga (\tiling)
    := \lim_{b \to \iy} \frac1{\log b} \int_{1/b}^1 \ge^{-(d-\ga)} V(T,\ge) \frac{d\ge}{\ge}
  \end{align}
  exists in $[0,\iy]$, 
  then $\Minka_\ga (\tiling)$ is called the \emph{\ga-dimensional average Minkowski content} of \tiling. 
  The generic term \emph{average Minkowski content} refers to the (only interesting) case when $0 < \Minka_{\ga}(\tiling) < \iy$ for $\ga=\dim_M \negsp[4]\tiling$.
\end{defn}



\begin{remark}[Various incarnations of \abscissa]
  \label{rem:similarity-dimn}
  We use the symbol \abscissa to denote the \emph{abscissa of convergence of \gzL}:
  \linenopax
  \begin{align}\label{eqn:}
    \abscissa := \inf\{\Re(s) \suth |\gzL(s)|<\infty\}.
  \end{align}
  This abscissa is analogous to the radius of convergence of a power series; the Dirichlet series $\sum_{j=1}^\iy \ell_j^s$ converges if and only if $\Re(s) > \abscissa$, in which case it converges absolutely. 
  It is clear from \eqref{eqn:Moran-bounds} that \abscissa exists and is both positive and finite.

 It follows from \cite[Thm.~3.6]{FGCD} that $\abscissa$ is a simple pole of \gzL and that $D$ is the only pole of \gzL (i.e., the only scaling complex dimension of \tiling) which lies on the positive real axis. Furthermore, it coincides with the unique real solution of \eqref{eqn:Moran}, often called the \emph{similarity dimension} of \attr and denoted by $\gd(\attr)$. Since \attr satisfies OSC, $D$ also coincides with the Minkowski and Hausdorff dimension of \attr, denoted by $\dim_M \negsp[4]\attr$ and $\dim_H \negsp[4]\attr$, respectively. (For this last statement, see \cite{Hut}, as described in \cite[Thm.~9.3]{Falconer}.)
  Moreover, it is clear that $\abscissa>0$ since $N \geq 2$, and that $\abscissa \leq d$; in fact, Proposition~\ref{cor:OSC-dimension-d-implies-trivial} implies $\abscissa < d$. In summary, we have
  \linenopax
  \begin{align}\label{eqn:D-dimensions}
    0 < \abscissa < d
    \qq\text{and}\qq
    \abscissa = \gd(\attr) = \dim_M \negsp[4]\attr = \dim_H \negsp[4]\attr.
  \end{align}
  In Theorem~\ref{thm:Mink-meas} below we establish that also $\dim_M \negsp[4]\tiling$ coincides with these numbers.
\end{remark}

The following result is an immediate consequence of \cite[Thm.~3.6]{FGCD}, which provides the structure of the complex dimensions of self-similar fractal strings (even for the case when \abscissa may be larger than 1).

\begin{prop}[Lattice/nonlattice dichotomy, see {\cite[\S4.3]{TFCD}}]
  \label{thm:lattice-nonlattice-dichotomy}
  \hfill \\
	  \emph{(Lattice case)}. When there is an $r>0$ such that each scaling ratio $r_n$ can be written as $r_n = r^{k_n}$ for some integer $k_n$, then the scaling complex dimensions lie periodically on finitely many vertical lines, including the line $\Re s = \abscissa$. This means that $\{\Re(s) \suth s \in \DL\}$ is a finite set, and there is a number $\per>0$ (called the \emph{oscillatory period}) such that for any integer $m \in \bZ$, $s+\ii m \per \in \DL$ whenever $s \in \DL$. Consequently, there are clearly infinitely many complex dimensions with real part \abscissa.
  \hfill \\
  \emph{(Nonlattice case)}. Otherwise, the scaling complex dimensions are quasiperiodically distributed (as described in \cite[\S3]{FGCD}) and $s = \abscissa$ is the only complex dimension with real part \abscissa. However, there exists an infinite sequence of simple scaling complex dimensions approaching the line $\Re s = \abscissa$ from the left.
  In the generic nonlattice case (that is, when the distinct scaling ratios generate a group of maximal rank), the set $\{\Re s \suth s \in \DL\}$ appears to be dense in finitely many compact subintervals of $[\gs_-,\gs_+]$, where $\gs_-, \gs_-$ are as in \eqref{eqn:Moran-bounds}; cf.~\cite[\S3.7.1]{FGCD}.
\end{prop}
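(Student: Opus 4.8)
The plan is to recognize that, by Definition~\ref{def:scaling-complex-dimensions}, the set $\sD_\sL$ is exactly the zero set of the entire function $g(s):=1-\sum_{n=1}^N r_n^s$ — equivalently the solution set of Moran's equation $\sum_{n=1}^N r_n^s=1$ — and then to deduce the claimed structure, which is the specialization to the self-similar case of the general result \cite[Thm.~3.6]{FGCD} (see also \cite[\S4.3]{TFCD}). First I would record the soft facts: each $r_n^s=e^{s\log r_n}$ is entire, so $g$ is entire and not identically zero, hence its zeros are isolated and $\gzL=1/g$ is meromorphic on $\bC$ with pole set exactly $\sD_\sL$; moreover, since $r_n^{\Re s}$ is strictly decreasing in $\Re s$, for $\Re s>\abscissa$ one has $|1-g(s)|\le\sum_{n}r_n^{\Re s}<\sum_n r_n^\abscissa=1$, so $g$ has no zeros with $\Re s>\abscissa$; and $g(\abscissa)=0$ with $g'(\abscissa)=-\sum_n(\log r_n)\,r_n^\abscissa>0$, so $\abscissa$ is a simple zero of $g$.

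In the lattice case I would argue directly. Writing $r_n=r^{k_n}$ with $r\in(0,1)$ and $k_n\in\bN$, the substitution $z=r^s$ turns Moran's equation into the polynomial equation $P(z)=0$, where $P(z):=\sum_{n=1}^N z^{k_n}-1$ satisfies $P(0)=-1$ and so has finitely many roots $z_1,\dots,z_M$, all nonzero. Since $s\mapsto r^s$ maps $\bC$ onto $\bC\setminus\{0\}$ with period $2\pi\ii/\log r$, each equation $r^s=z_j$ contributes an arithmetic progression $s_j+\ii\per\,\bZ$ with $\per:=2\pi/\log(1/r)>0$; hence $\sD_\sL=\bigcup_{j=1}^M(s_j+\ii\per\,\bZ)$. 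This set is manifestly invariant under $s\mapsto s+\ii m\per$ for every $m\in\bZ$, it lies on at most $M$ vertical lines $\Re s=\log|z_j|/\log r$, and $\{\Re s:s\in\sD_\sL\}$ is therefore finite. Finally, $|P(z)+1|\le\sum_n|z|^{k_n}<\sum_n r_n^\abscissa=1$ whenever $|z|<r^\abscissa$, so every root satisfies $|z_j|\ge r^\abscissa$, i.e.\ $\Re s_j\le\abscissa$; since $z=r^\abscissa$ is itself a root of $P$, the line $\Re s=\abscissa$ is one of these lines and carries the infinitely many points $\abscissa+\ii\per\,\bZ$. (Being lattice in this sense is equivalent to the additive group generated by $\log r_1,\dots,\log r_N$ being cyclic, i.e.\ of rank one.)

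In the nonlattice case — when no such common $r$ exists, equivalently when $\langle\log r_1,\dots,\log r_N\rangle$ is dense in $\bR$ — I would isolate the two elementary facts and defer the delicate one. First, $s=\abscissa$ is the only zero of $g$ on the line $\Re s=\abscissa$: if $\Re s=\abscissa$ and $\sum_n r_n^s=1$, then equality holds in $1=\bigl|\sum_n r_n^\abscissa e^{\ii(\Im s)\log r_n}\bigr|\le\sum_n r_n^\abscissa=1$, which (all coefficients being positive) forces $(\Im s)\log r_n\in2\pi\bZ$ for every $n$; if $\Im s\ne0$ this would make all ratios $\log r_n/\log r_m$ rational and $\{r_n\}$ lattice, a contradiction, so $\Im s=0$. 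Second, there is an infinite sequence of simple zeros of $g$ with real part tending to $\abscissa$ from the left: by Kronecker's theorem on simultaneous Diophantine approximation there are $t_k\to\infty$ with $e^{\ii t_k\log r_n}\to1$ for all $n$, so $g(\mydot+\ii t_k)\to g$ uniformly on compacta, and Hurwitz's theorem at the simple zero $\abscissa$ of $g$ yields, for all large $k$, a unique (hence simple) zero $\sigma_k\to\abscissa$ of $g(\mydot+\ii t_k)$; then $s_k:=\sigma_k+\ii t_k$ is a simple zero of $g$ with $\Re s_k\to\abscissa$, and $\Re s_k<\abscissa$ by the first fact together with $\Re s_k\le\abscissa$. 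What remains — that the zeros are quasiperiodically distributed, and that in the generic case (distinct scaling ratios generating a group of maximal rank) $\{\Re s:s\in\sD_\sL\}$ is dense in finitely many compact subintervals of $[\gs_-,\gs_+]$ — is exactly the Diophantine analysis of generalized Dirichlet polynomials carried out in \cite[\S3]{FGCD} (and \cite[\S3.7.1]{FGCD} for the generic statement), which I would quote rather than reprove; this is the only part that is not routine, since the lattice computation and the two facts above settle everything else, so that the proof is in effect just the translation of the self-similar Moran equation into the hypotheses of \cite[Thm.~3.6]{FGCD}.
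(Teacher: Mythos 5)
Your argument is correct, but it is organized quite differently from the paper's treatment: the paper offers no proof at all, presenting the proposition as an immediate consequence of the structure theorem for complex dimensions of self-similar strings (\cite[Thm.~3.6]{FGCD}, see also \cite[\S4.3]{TFCD}), whereas you actually re-derive the elementary parts from the Moran equation. Your lattice computation (the substitution $z=r^s$, the polynomial $P(z)=\sum_n z^{k_n}-1$ with $P(0)=-1$, the resulting finitely many arithmetic progressions $s_j+\ii\per\,\bZ$, and the bound $|z_j|\ge r^{\abscissa}$ giving $\Re s_j\le\abscissa$) is exactly the standard proof and is complete; your two nonlattice facts are also sound --- the triangle-inequality equality case correctly shows $\abscissa$ is the unique pole on $\Re s=\abscissa$ precisely because $t\log r_n\in 2\gp\bZ$ for $t\neq 0$ would force the lattice condition, and the almost-periodicity/Hurwitz argument at the simple zero $\abscissa$ does produce infinitely many simple zeros $s_k$ with $\Re s_k<\abscissa$ and $\Re s_k\to\abscissa$ (they are distinct since $\Im s_k\to\infty$). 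You are also right to quote rather than reprove the quasiperiodic distribution and the generic-case density assertion, since the latter is only an empirical statement (``appears to be dense'') even in \cite[\S3.7.1]{FGCD}. What your route buys is a self-contained verification that the cited general theorem really does specialize as claimed; what the paper's route buys is brevity and access to the finer conclusions (density estimates on the poles, uniform residue bounds) that are invoked later in the proof of Theorem~\ref{thm:Mink-meas} and which your elementary argument does not reproduce.
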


The proof of Theorem~\ref{thm:Mink-meas} below is based on this lattice/nonlattice dichotomy; see also \cite[Prop.~5.5]{Pointwise} or \cite[\S3]{FGCD}.

\begin{remark}\label{rem:dichotomy}
  It follows from \cite[Thm.~3.6]{FGCD} that in the lattice case, 
  each scaling complex dimension (i.e., each pole \gw of \gzL) has the same multiplicity (and Laurent expansion with the same principal part) on each vertical line, and that each has real part satisfying $\Re \gw \leq \abscissa$.
  In particular, since $\abscissa$ is simple (see Remark~\ref{rem:similarity-dimn}), all the scaling complex dimensions $\{\abscissa+\ii m\per\}_{m \in \bZ}$ (where $\per = 2\gp/\log r^{-1}$) along the vertical line $\Re s = \abscissa$ are simple and have residue given by
  \linenopax
  \begin{align}\label{eqn:lattice-residues}
    \res[\abscissa]{\gzL(s)}
    = \frac{1}{\log r^{-1} \sum_{n=1}^N k_n r^{k_n \abscissa}}.
  \end{align}
  In the nonlattice case, $\abscissa$ is simple with residue
  \linenopax
  \begin{align}\label{eqn:nonlattice-residues}
    \res[\abscissa]{\gzL(s)}
    = \frac{1}{\sum_{n=1}^N r_n^{\abscissa} \log r_n^{-1}}.
  \end{align}
  Note that \eqref{eqn:nonlattice-residues} is also valid in the lattice case. 
\end{remark}

\begin{defn}\label{def:spam}
  Given $s \in \bC$, let
  \linenopax
  \begin{align}\label{eqn:Md(G)}
    \spam_s(\gen)
    := \sum_{k=0}^{d-1} \frac{\genir^{s-k}}{s-k} (d-k) \crv_k(\gen).
  \end{align}
\end{defn}

The sum extends only to $d-1$ in \eqref{eqn:Md(G)} because one has $\crv_d \equiv 0$ in the monophase case, as noted in Definition~\ref{def:monophase}. 

\begin{remark}\label{rem:Gamma}
Using that $G$ is monophase, the Steiner-like representation \eqref{eqn:def-prelim-Steiner-like-formula} (with $\gk_d=0$), and the relation $V(G,g)=\sum_{k=0}^{d-1} g^{d-k} \gk_k(G)$ (from \eqref{eqn:def-prelim-Steiner-like-formula}) 
it is not difficult to see that for any real $\ga \in (d-1,d)$, one has
\linenopax
\begin{align*}
  \int_0^\infty \ge^{\ga-d} V(G,\ge) \frac{d\ge}\ge 
  &= \int_0^g \sum_{k=0}^{d-1} \ge^{\ga-k-1} \gk_k(G) d\ge +\int_g^\infty \ge^{\ga-d-1} V(G,g) d\ge \\  
  &=  \sum_{k=0}^{d-1} \frac{g^{\ga-k}}{\ga-k} \gk_k(G) - \sum_{k=0}^{d-1} \crv_k(G) \frac{g^{\ga-k}}{\ga-d}  \\
  &= \sum_{k=0}^{d-1} g^{\ga-k} \gk_k(G)\left(\frac 1{\ga-k}+\frac 1{d-\ga}\right) \\
  &= \frac 1{d-\ga} \Gamma_\ga(G).
\end{align*}
Since the volume $V(G,\cdot)$ is clearly a strictly positive function on $(0,\infty)$, this computation shows that $\Gamma_\ga(G)>0$ for any real $\ga \in (d-1,d)$. In particular, $\Gamma_D(G)>0$ under the hypothesis of Theorem~\ref{thm:Mink-meas} below. See also \cite[Rem.~4]{Kocak1}.
\end{remark}

\begin{theorem}[Minkowski measurability of self-similar tilings, monophase case]
  \label{thm:Mink-meas}
  Suppose a self-similar tiling \tiling in \bRd has a single monophase generator $G$ and the abscissa of convergence $D$ of the associated scaling zeta function $\gz_\sL$  satisfies $d-1<D$.
  Then
  $\dim_M \negsp[4]\tiling = \abscissa$. Moreover, \tiling is Minkowski measurable if and only if $\gz_\sL$ is nonlattice.
  In this case, the Minkowski content of \tiling is given by 
  \linenopax
  \begin{align}\label{eqn:nonlattice-content}
    \Mink_{\abscissa}(\tiling) = \frac{\spam_\abscissa(\gen)}{(d-\abscissa)\sum_{n=1}^N r_n^{\abscissa} \log r_n^{-1}},
  \end{align}
  where $\spam_D(\gen)$ is as in \eqref{eqn:Md(G)}. 
  Moreover, $0 < \Mink_{\abscissa}(\tiling) < \iy$. 
  In the lattice case, the Minkowski content of \tiling does not exist, but the \emph{average} Minkowski content $\Minka_\abscissa(\tiling)$ exists and 
  \linenopax
  \begin{align}\label{eqn:lattice-content}
    \Minka_\abscissa(\tiling) = \frac{\spam_\abscissa(\gen)}{(d-\abscissa) \sum_{n=1}^N r^{k_n \abscissa} k_n  \log r^{-1} }.
  \end{align}
  Furthermore, $0 < \Minka_{\abscissa}(\tiling) < \iy$. 
\end{theorem}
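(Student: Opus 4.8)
The plan is to reduce the theorem to the renewal theorem by exploiting the self‑similar structure of \tiling directly, using the fractal tube formula of Theorem~\ref{thm:Fractal-tube-formula-with-error-term} afterwards to pin down the lattice case. First I would record the geometric functional equation for the inner parallel volume. Because the tiles $\simt_w(\gen)$, $w\in\Words$, are pairwise disjoint open sets whose boundaries all lie in $T^c$, one has $\dist(x,T^c)=\dist(x,\partial\simt_w(\gen))$ for $x\in\simt_w(\gen)$, so the inner $\ge$-parallel set of $T$ splits tilewise, and since $\simt_w$ rescales distances by $r_w$ and volume by $r_w^d$,
\begin{align*}
  V(T,\ge)=\sum_{w\in\Words}r_w^d\,V(\gen,r_w^{-1}\ge)=V(\gen,\ge)+\sum_{n=1}^N r_n^d\,V(T,r_n^{-1}\ge),\qquad\ge>0
\end{align*}
(the second equality by isolating the empty word; this is the identity underlying the tube formulas of \cite{Pe2,GeometryOfSST,Pointwise}, with $V(\gen,\ge)=\lambda_d(\gen)$ for $\ge\geq\genir$). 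Substituting $\ge=e^{-t}$ and putting $u(t):=e^{(d-D)t}V(T,e^{-t})$, $z(t):=e^{(d-D)t}V(\gen,e^{-t})$, $p_n:=r_n^{D}$ and $\tau_n:=\log r_n^{-1}>0$, the Moran identity $\sum_{n=1}^N r_n^{D}=1$ (Remark~\ref{rem:similarity-dimn}) turns this into a genuine renewal equation $u(t)=z(t)+\sum_{n=1}^N p_n\,u(t-\tau_n)$ on \bR, driven by the probability vector $(p_n)$ on the positive steps $(\tau_n)$. Next I would check that $z$ is admissible: it is continuous and nonnegative; since \gen is monophase, $z(t)=\sum_{k=0}^{d-1}\crv_k(\gen)\,e^{-(D-k)t}$ for $t\geq\log\genir^{-1}$, which decays exponentially as $t\to+\infty$ \emph{exactly because} $D>d-1$ makes every exponent $D-k$ ($0\leq k\leq d-1$) positive, while $z(t)=\lambda_d(\gen)\,e^{(d-D)t}\to0$ as $t\to-\infty$; thus $z$ is directly Riemann integrable, and the Mellin computation of Remark~\ref{rem:Gamma} (valid on the line $\Re s=D$) gives $\int_{\bR}z=\spam_D(\gen)/(d-D)\in(0,\iy)$. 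Write $\mu:=\sum_{n=1}^N p_n\tau_n=\sum_{n=1}^N r_n^{D}\log r_n^{-1}\in(0,\iy)$ for the mean step.

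Then the dichotomy turns on whether the steps $\tau_n=\log r_n^{-1}$ all lie in a common discrete subgroup of \bR, which happens precisely when the $r_n$ are all integer powers of a single $r$, i.e.\ precisely when \gzL is lattice. In the nonlattice case the step distribution is non-arithmetic, so the key renewal theorem gives $u(t)\to\mu^{-1}\int_{\bR}z$ as $t\to\iy$; unwinding $\ge=e^{-t}$ yields $\Mink_{D}(\tiling)=\lim_{\ge\to0^+}\ge^{-(d-D)}V(T,\ge)=\spam_D(\gen)\big/\big((d-D)\sum_n r_n^{D}\log r_n^{-1}\big)$, which is \eqref{eqn:nonlattice-content} and lies in $(0,\iy)$; in particular \tiling is Minkowski measurable of dimension $D$, so $\dim_M\tiling=D$. (This is essentially Gatzouras's renewal argument \cite{Gatzouras}, carried over to the tiling and made quantitative.)

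In the lattice case, writing $\tau_n=k_n\tau$ with $\tau:=\log r^{-1}$, the renewal theorem in the arithmetic case shows that along each coset $u(t_0+m\tau)\to L(t_0):=(\tau/\mu)\sum_{j\in\bZ}z(t_0+j\tau)$; since $z>0$ everywhere and is continuous and decaying, $L$ is continuous, $\tau$-periodic and satisfies $0<\min L\leq\max L<\iy$. Hence $\dim_M\tiling=D$ again (from $\liminf_{\ge\to0^+}\ge^{-(d-D)}V(T,\ge)=\min L>0$ together with the matching finite $\limsup$), and averaging $u$ over one period gives $\Minka_{D}(\tiling)=\mu^{-1}\int_{\bR}z=\spam_D(\gen)\big/\big((d-D)\sum_n r^{k_n D}k_n\log r^{-1}\big)\in(0,\iy)$, which is \eqref{eqn:lattice-content}, upon noting that $\ge=e^{-t}$ identifies $\tfrac1T\int_0^T u(t)\,dt$ with $\tfrac1{\log b}\int_{1/b}^1\ge^{-(d-D)}V(T,\ge)\,\tfrac{d\ge}{\ge}$. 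Finally I must rule out Minkowski measurability here, i.e.\ show $L$ is non-constant. By Poisson summation the $m$-th Fourier coefficient of $L$ equals the coefficient $c_{D+\ii m\per}$ from Theorem~\ref{thm:Fractal-tube-formula-with-error-term}, and since $\spam_s(\gen)=\genir^s P(s)\big/\prod_{k=0}^{d-1}(s-k)$ with $P$ a polynomial of degree $\leq d-1$ that is not identically $0$ (as $\spam_D(\gen)>0$), these are nonzero for all but finitely many $m$. Equivalently --- the cross-check via the tube formula --- applying Theorem~\ref{thm:Fractal-tube-formula-with-error-term} with a vertical screen $\Re s=\gs$, $d-1<\gs<D$, lying just left of $\Re s=D$ and right of every other vertical line of scaling complex dimensions, gives $V(T,\ge)=\ge^{d-D}\sum_{m\in\bZ}c_{D+\ii m\per}\,\ge^{-\ii m\per}+O(\ge^{d-\gs})$ with an absolutely convergent ($|c_{D+\ii m\per}|=O(m^{-2})$), $\log r^{-1}$-periodic, non-constant leading term, so $\lim_{\ge\to0^+}\ge^{-(d-D)}V(T,\ge)$ does not exist.

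The main obstacle is not the bookkeeping but two genuinely delicate points. First, verifying the hypotheses of the renewal theorem --- above all the direct Riemann integrability of $z$ on all of \bR --- is exactly where the monophase assumption and the standing hypothesis $D>d-1$ enter: the latter guarantees both the exponential decay of $z$ at $+\infty$ and the convergence, at $s=D$ and at $s=D+\ii m\per$, of the Mellin integrals behind Remark~\ref{rem:Gamma}. Second, in the lattice case one must exclude the coincidence that the periodic limit $L$ is constant (which would wrongly make \tiling Minkowski measurable), which I reduce to the elementary observation that $\spam_s(\gen)$ is $\genir^s$ times a rational function of degree $\leq d-1$ and so cannot vanish at infinitely many of the points $D+\ii m\per$; correctly invoking the arithmetic renewal theorem to extract the \emph{average} Minkowski content, rather than just a Cesàro bound, is the last subtlety.
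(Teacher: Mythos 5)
Your argument is correct in its essentials, but it takes a genuinely different route from the paper. The paper works entirely through the pointwise tube formula with error term (Theorem~\ref{thm:Fractal-tube-formula-with-error-term}): in the lattice case it takes a vertical screen $\Re s=\gq$ with $d-1<\gq<\abscissa$ so that only the poles on $\Re s=\abscissa$ are visible and sums their residues into the Fourier series \eqref{eqn:xG}; in the nonlattice case it invokes \cite[Thm.~3.25]{FGCD} to produce a screen arbitrarily close to $\Re s=\abscissa$ with simple visible poles and uniformly bounded residues, and then kills the contribution of the remaining visible dimensions using the linear density of the poles. You instead derive the exact scaling identity $V(T,\ge)=V(\gen,\ge)+\sum_{n}r_n^dV(T,r_n^{-1}\ge)$ from the tilewise decomposition of the inner parallel set and feed it into the key renewal theorem, in the spirit of Gatzouras \cite{Gatzouras}; the tube formula is then only needed (if at all) to identify the Fourier coefficients of the periodic limit in the lattice case. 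Both routes share the one truly essential lemma --- that $\spam_s(\gen)=\genir^s p(s)/\bigl(s(s-1)\cdots(s-d+1)\bigr)$ with $p$ a nonzero polynomial of degree at most $d-1$, so it cannot vanish at all of the points $\abscissa+\ii m\per$, $m\neq0$ --- and your identification of the Fourier coefficients of $L$ with the coefficients $c_{\abscissa+\ii m\per}$ is precisely the Mellin computation of Remark~\ref{rem:Gamma} continued to the critical line. What your approach buys is independence from the screen and residue-bound machinery of \cite[Thm.~3.25]{FGCD} and from the density estimate for the poles, with the monophase hypothesis and $\abscissa>d-1$ entering exactly where you say, namely in the direct Riemann integrability of $z(t)=e^{(d-\abscissa)t}V(\gen,e^{-t})$; what the paper's approach buys is the explicit error term $O(\ge^{d-\sup f})$ and the full expansion over complex dimensions. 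Two small points to tidy: you should note that $\sum_{w\in\Words}r_w^d<\iy$ (since $\abscissa<d$) so that your tilewise sum and the representation $u=z*U$ are legitimate; and the equalities $\liminf\ge^{-(d-\abscissa)}V(T,\ge)=\min L$ and $\limsup=\max L$ require the \emph{uniform} version of the arithmetic renewal theorem (pointwise convergence along each coset does not by itself evaluate the $\liminf$ over all $\ge\to0^+$), although for the conclusion $\dim_M\tiling=\abscissa$ it suffices that $u$ is eventually bounded above and below away from zero, which follows from boundedness of the renewal masses and the strict positivity and decay of $z$.
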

\begin{proof}
First, note that $d-1 < \abscissa < d$; the first inequality holds by hypothesis, and the second holds for any self-similar tiling, by \cite[Cor.~2.13]{GeometryOfSST}.

\emph{Lattice case.}
In this case, the scaling ratios of the similarity mappings are $r_n = r^{k_n}$, $n=1,\ldots,N$, for some $0<r<1$ and positive integers $\{k_n\}_{n=1}^N$. Moreover, the complex dimensions of \sL are periodically distributed with period $\per ={2\gp}/{\log r^{-1}}$ along finitely many vertical lines, the rightmost of which is the line $\Re s = \abscissa$; see Remark~\ref{rem:dichotomy} above and \cite[Thm.~3.6]{FGCD}.
  One can therefore take the screen $S$ to be any vertical line in \bC of the form $\Re s = \gq$, for which (i) $d-1 < \gq < \abscissa$, and (ii) the only scaling complex dimensions of \tiling in the window $W$ are those with $\Re s = \abscissa$.
  We then apply the methods of proof of \cite[Thm.~8.23]{FGCD} and \cite[Thm.~8.30]{FGCD}.   The tubular zeta function in \cite{TFCD,Pointwise} is different from the tubular zeta function corresponding to the 1-dimensional case considered in \cite{FGCD}, but they have similar forms. (Some discussion of this issue is provided in \cite[\S13.1]{FGCD}.)

For clarity, we will now explain in detail how to complete the proof in the present special case of a self-similar tiling in \bRd with a monophase generator.
In view of \cite[Rem.~5.6]{Pointwise}, all the poles of \gzL on the line $\Re s = \abscissa $ are simple, including \abscissa itself. Moreover, by \cite[Rem.~5.6]{Pointwise} or \cite[Thm.~2.16]{FGCD} (see Remark~\ref{rem:dichotomy}), for each $m\in \bZ$, the residue of \gzL at the pole $\abscissa + \ii m \per$ is independent of $m$ and equal to
  \linenopax
  \begin{align}\label{eqn:lattice-residues}
    \res[\abscissa]{\gzL(s)}
    = \frac{1}{\sum_{n=1}^N r_n^{\abscissa} \log r_n^{-1}}
    = \frac{1}{\log r^{-1} \sum_{n=1}^N k_n r^{k_n \abscissa}}.
  \end{align}
  More specifically, we apply Theorem~\ref{thm:Fractal-tube-formula-with-error-term} with the aforementioned choice of screen $S = \{\Re s = \gq\}$, where $d-1 < \gq < \abscissa$, to obtain
  \linenopax
  \begin{align}\label{eqn:V-in-terms-of-xG}
    V(T,\ge) = \ge^{d-\abscissa} \xG\left(\log_{r^{-1}}(\ge^{-1})\right)
    +O(\ge^{d-\gq}), \q\text{as } \ge \to 0^+,
  \end{align}
  where \xG is the \bR-valued periodic function (of period 1) on \bR given by the following absolutely convergent Fourier series expansion:
  \linenopax
  \begin{align}
    \xG(x)
    &= \res[\abscissa]{\gzL(s)} \sum_{m \in \bZ} \sum_{k=0}^{d-1}
      \frac{\genir^{\abscissa + \ii m \per - k}(d-k)}
      {(\abscissa + \ii m \per - k)(d-\abscissa - \ii m \per)}
      \crv_k(\gen) e^{2\gp \ii m x} \notag \\
    &= \res[\abscissa]{\gzL(s)} \sum_{m \in \bZ}
      \frac{\spam_{\abscissa+\ii m \per}(\gen)}{d-\abscissa - \ii m \per}
      e^{2\gp \ii m x}, 
    \label{eqn:xG}
  \end{align}
  where we used \eqref{eqn:Md(G)} in the last equality. Note that the  periodic function \xG is nonconstant if and only if there is some $m \in \bZ \less\{0\}$ for which the \nth[m] Fourier coefficient is nonzero.   Observe that $\spam_{\abscissa+\ii m \per}(\gen) \neq 0$ for some $m \in \bZ \less\{0\}$ if and only if $\spam_{\abscissa+\ii m \per}(\gen) \neq 0$ for some $m \geq 1$, since the periodic function \xG is \bR-valued.
  In light of \eqref{eqn:xG}, and since $\res[\abscissa]{\gzL(s)} \neq 0$ by \eqref{eqn:lattice-residues}, this occurs if and only if
  \linenopax
  \begin{align*}
    \spam_{\abscissa+\ii m \per}(\gen)
    = \sum_{k=0}^{d-1} \frac{\genir^{{\abscissa+\ii m \per}-k}}{{\abscissa+\ii m \per}-k} (d-k) \crv_k(\gen)\neq 0
    \qq \text{ for some } m \in \bZ \less\{0\}.
  \end{align*}
  The validity of this last condition is seen as follows:
  first observe that
  \linenopax
  \begin{align*}
  \Gamma_s(G)
  := g^s\sum_{k=0}^{d-1}\frac{\genir^{-k}}{{s-k}} (d-k) \crv_k(\gen)
   = g^s \frac{p(s)}{s(s-1) \dots (s-d+1)},
  \end{align*}
  where $p$ is some polynomial of degree at most $d-1$. See also \cite[Rem.~3]{Kocak1} for a closely related computation. Since $\Gamma_s(G)=0$ only if $p(s)=0$, we conclude that $\Gamma_s(G)$ has at most $d-1$ zeros. This implies in particular that  $\spam_{\abscissa+\ii m \per}(\gen)$ cannot be zero for all $m\in\bZ\setminus\{0\}$.
  Recall also that the \nth[0] Fourier coefficient of $\xG$ is positive because $\spam_\abscissa(\gen) > 0$ by Remark~\ref{rem:Gamma}.

  Since $\theta <\abscissa$, we have
  \linenopax
  \begin{align}\label{eqn:O-o}
    O(\ge^{\abscissa-\gq})=o(1),
    \qq\text{ as } \ge \to 0^+.
  \end{align}
  In combination with \eqref{eqn:V-in-terms-of-xG}, this yields
  \linenopax
  \begin{align}\label{eqn:V-decay}
    \ge^{-(d-\abscissa)} V(T,\ge) = \xG(\log_{r^{-1}}(\ge^{-1})) + o(1),
    \qq\text{ as } \ge \to 0^+.
  \end{align}
  Since \xG has a nonzero \nth[0] Fourier coefficient and is nonconstant, bounded and periodic, it follows from \eqref{eqn:V-decay} and \eqref{eqn:Minkowski-content} that \tiling is not Minkowski measurable. Moreover, \eqref{eqn:V-decay} implies  that \tiling has Minkowski dimension \abscissa.
  Indeed, for any given $\alpha>\abscissa$, the boundedness of $\xG$ implies that $\ge^{-(d-\alpha)} V(T,\ge)$ vanishes as $\ge\to 0^+$. In light of \eqref{eqn:Minkowski-dimension}, this yields $\dim_M \negsp[4]\sT\le \abscissa$. For the reverse inequality, let $\alpha<D$. Since \xG is periodic and not identically zero, we can find a sequence of positive numbers $(\ge_n)$ tending to 0 and some constant $c\neq 0$  such that $\xG(\log_{r^{-1}}(\ge_n^{-1}))=c$ for all $n\in\bN$. Obviously, we have $|\ge_n^{\alpha-\abscissa}\xG(\log_{r^{-1}}(\ge_n^{-1}))| \to \infty$ as $n\to\infty$, which implies the same for the sequence  $\ge_n^{-(d-\alpha)} V(T,\ge_n)$.
   Hence, for any given $\ga < \abscissa$, $V(T,\ge)$ is not $O(\ge^{d-\alpha})$ as $\ge \to 0^+$, which (by \eqref{eqn:Minkowski-dimension}) proves $\dim_M \negsp[4]\sT \geq \abscissa$. 
   The inequality $\dim_M \negsp[4]\sT \geq \abscissa$ can also be derived from the fact that the associated self-similar set $F$ is always a subset of the boundary of $\sT$, which immediately implies $\dim_M \negsp[4]\sT \geq \dim_M \negsp[4]F$ (see \cite[Prop.~6.1 and Rem. 5.12]{GeometryOfSST}). Note also that the Minkowski dimension $\dim_M \negsp[4]\attr$ of \attr coincides with $D$, cf.\ Remark~\ref{rem:similarity-dimn}.
   
   Finally, following the proof of \cite[Thm.~8.30]{FGCD}, we divide \eqref{eqn:V-in-terms-of-xG} by \ge and integrate from $\frac1b$ to 1 with respect to \ge, as in \eqref{eqn:alpha-average-Minkowski-content}. With the change of variables $x=\log_{r^{-1}}(\ge^{-1})$, and in view of \eqref{eqn:O-o}, %
  this yields
  \linenopax
  \begin{align}\label{eqn:G-integral-limit}
    \frac{1}{\log b} \int_{1/b}^1 \ge^{-(d-\abscissa)} V(T,\ge) \frac{d\ge}{\ge} = \frac{1}{\log_{r^{-1}} b} \int_{0}^{\log_{r^{-1}} b} \xG(x) \, dx + o(1),
    \qq \text{as } b \to \iy.
  \end{align}
  Since \xG is 1-periodic, the latter expression tends to $\int_0^1 \xG(x)\,dx$ as $b \to \iy$. In view of Definition~\ref{def:Average-Minkowski-content}, we deduce that the average Minkowski content $\Minka_\abscissa(\tiling)$ exists and is given by the \nth[0] Fourier coefficient of \xG. In other words, \eqref{eqn:xG} yields
  \linenopax
  \begin{align}\label{eqn:avg-Mink(tiling)}
    \Minka_\abscissa(\tiling)
    &= \int_0^1 \xG(x) \, dx
     = \res[\abscissa]{\gzL(s)} \frac{\spam_{\abscissa}(\gen)}{d-\abscissa},
  \end{align}
  which coincides with \eqref{eqn:lattice-content}, as claimed. Note that $\spam_\abscissa(\gen) > 0$ by Remark~\ref{rem:Gamma}. 
  This concludes the proof of Theorem~\ref{thm:Mink-meas} in the lattice case.

\emph{Nonlattice case.}
In this case, \abscissa is the only pole of \gzL on the line $\Re s = \abscissa$ and we follow the method of proof of \cite[Thm.~8.36]{FGCD}. Because \cite[Thm.~3.25]{FGCD} applies to generalized self-similar strings, it holds even for $\abscissa > 1$. Consequently, the statement and method of proof are applicable in the present context of self-similar tilings in \bRd. 
As is recalled in Remark~\ref{rem:similarity-dimn}, \abscissa is simple; see Thm.~2.16 or Thm.~3.6 of \cite{FGCD}. According to \cite[Thm.~3.25]{FGCD}, there exists a screen $S$ which lies to the left of the line $\Re s = \abscissa$ such that \gzL is bounded on $S$, and all the visible scaling complex dimensions are simple and have uniformly bounded residues, in the sense that there is a constant $C>0$ for which
\linenopax
\begin{align}\label{eqn:residue-bound}
  \left|\res[\gw]{\gzL(s)}\right| \leq C,
  \qq \text{for all } \pole \in \DL(W),
\end{align}
where $W$ is the window corresponding to $S$.
In fact, the screen can be taken to be arbitrarily close to (but bounded away from) the line $\Re s = \abscissa$. More precisely, one can choose this screen $S = S_f$ (so $f$ denotes the function defining $S$) such that
\linenopax
  \begin{align}\label{eqn:}
    d-1 < \abscissa - 2\gd < \inf f \leq \sup f < \abscissa - \gd < \abscissa < d,
  \end{align}
for some fixed but arbitrarily small $\gd > 0$.
Except for $\pole = \abscissa$, this ensures all the visible complex dimensions \pole of \tiling lie in
\linenopax
\begin{align}\label{eqn:polestrip}
  \polestrip_\gd := \{s \in \bC \suth \abscissa - 2\gd < \Re s < \abscissa\}.
\end{align}
Since each $\pole \in \DL(W)$ is simple and lies in $P_\gd$, this choice of \gd and $S$ implies that $\DL(W) \cap \{0,1,\dots,d\} = \es$; i.e., that there are \emph{no} visible integer dimensions that are poles of \gzT.
Hence, upon application of Theorem~\ref{thm:Fractal-tube-formula-with-error-term} to the above screen $S$, \eqref{eqn:Fractal-tube-formula-with-error-term-simplified} becomes
\linenopax
\begin{align}\label{eqn:Minkowski-characterization-deriv-2}
  V(T,\ge)
  &= c_\abscissa \ge^{d-\abscissa} + \sum_{\pole \in \DL(W) \cap \polestrip_\gd} c_\pole \ge^{d-\pole} +\R(\ge),
\end{align}
where, in light of \eqref{eqn:self-similar-pointwise-tube-formula-coefficients-cw} and \eqref{eqn:Md(G)},
\linenopax
\begin{align}
  c_\pole
  = \res[\gw]{\gzL(s)} \frac{\spam_\pole(\gen)}{d-\pole},
  \qq\text{for } \pole \in \DL(W),
    \label{eqn:nonlattice-Minkowski-content-gen}
\end{align}
including the case when $\gw = \abscissa$.
Note that $c_\abscissa > 0$, since $\spam_\abscissa(\gen) > 0$, by Remark~\ref{rem:Gamma}, and since \abscissa is a pole of \gzL. Indeed, by \cite[Rem.~5.6]{Pointwise} or \cite[Thm.~2.16]{FGCD} (see Remark~\ref{rem:dichotomy}), the residue $\res[\abscissa]{\gzL(s)}$ is given by the first equality of \eqref{eqn:lattice-residues}, and thus $\res[\abscissa]{\gzL(s)} > 0$. In combination with \eqref{eqn:Minkowski-characterization-deriv-2} and the error estimate $\R(\ge) = O(\ge^{d - \sup f})$ as $\ge \to 0^+$ (from Theorem~\ref{thm:Fractal-tube-formula-with-error-term}), this implies
\linenopax
\begin{align}\label{eqn:Minkowski-measurability-claim}
  \ge^{-(d-\abscissa)} V(T,\ge)
  &= c_\abscissa + \sum_{\pole \in \DL(W) \cap \polestrip_\gd} c_\pole \ge^{\abscissa-\pole} + O(\ge^{\abscissa-\sup f}),
  \qq \text{as } \ge \to 0^+.
\end{align}
Also observe that since $\sup f \leq \abscissa-\gd$, we have $\abscissa  - \sup f \geq \gd$, and hence
\linenopax
\begin{align}\label{eqn:order-est}
  O\left(\ge^{\abscissa-\sup f}\right) = O(\ge^{\gd}) = o(1),
  \qq\text{as } \ge \to 0^+.
\end{align}
To see that \tiling is Minkowski measurable with Minkowski content
\linenopax
\begin{align}
  c_\abscissa
  &= \res[s=\abscissa]{\gzL(s)} \frac{\spam_\abscissa(\gen)}{d-\abscissa},
    \label{eqn:nonlattice-Minkowski-content}
\end{align}
we reason as in the proof of \cite[Thm.~8.36]{FGCD}. We first show that the sum in \eqref{eqn:Minkowski-measurability-claim} is absolutely convergent and tends to $0$ as $\ge \to 0^+$; see Definition~\ref{def:Minkowski-content-and-dimension}. Indeed, note that \eqref{eqn:residue-bound} and \eqref{eqn:nonlattice-Minkowski-content-gen} implies
\linenopax
\begin{align*}
  |c_\pole| \leq C \frac{|\spam_\pole(\gen)|}{|d-\pole|},
  \qq \text{for } \pole \in \DL(W),
\end{align*}
where the positive constant $C$ is as in \eqref{eqn:residue-bound}.
Therefore, 
for a fixed $\ge > 0$, the sum in \eqref{eqn:Minkowski-measurability-claim} can be compared to
\linenopax
\begin{align}\label{eqn:polestrip-sum}
  \sum_{\pole \in \sD_\sL(W) \cap \polestrip_\gd} \frac1{|\pole|^{2}},
\end{align}
which converges by the density estimate (3.10) of \cite[Thm.~3.6]{FGCD}, according to which the poles of \gzL have a \emph{linear} density. In other words, \eqref{eqn:polestrip-sum} converges because $\sum_{n=1}^\iy \frac1{n^{2}} < \iy$.

This argument enables us to apply the method of proof of \cite[Thm.~5.17]{FGCD}
to deduce
\linenopax
\begin{align*}
  \sum_{\pole \in \sD_\sL(W) \cap \polestrip_\gd} c_\pole \ge^{-\pole}
  = o(\ge^{-\abscissa}),
  \qq\text{as } \ge \to 0^+,
\end{align*}
in light of \eqref{eqn:polestrip}, and hence that
\linenopax
\begin{align}\label{eqn:order-estimate-on-the-strip}
  \sum_{\pole \in \DL(W) \cap \polestrip_\gd} c_\pole \ge^{d-\pole}
  = o(\ge^{d-\abscissa}) = o(1),
  \qq\text{as } \ge \to 0^+,
\end{align}
since $d > \abscissa$. Observe that the sum in \eqref{eqn:order-estimate-on-the-strip} converges for each fixed $\ge > 0$. Now one can see that \eqref{eqn:Minkowski-measurability-claim}, \eqref{eqn:order-est}, and \eqref{eqn:order-estimate-on-the-strip} imply that
\linenopax
\begin{align}\label{eqn:Minkowski-measurability-nonlattice-satisfied}
  \ge^{-(d-\abscissa)} V(T,\ge)
  = c_\abscissa + o(1),
  \qq \text{as } \ge \to 0^+.
\end{align}
  In light of \eqref{eqn:Minkowski-content}, it follows that
  \linenopax
  \begin{align}\label{eqn:Minkowski_D-content}
    \Mink_\abscissa(\tiling)
    = \lim_{\ge \to 0^+} \ge^{-(d-\abscissa)} V(T, \ge)
    = c_\abscissa,
  \end{align}
  which is both positive and finite, as noted above. 
  Since $0 < \Mink_D(\tiling) < \iy$, it follows
  that $\dim_M \negsp[4]\tiling = D$ and that \tiling is Minkowski measurable. This concludes the proof of Theorem~\ref{thm:Mink-meas} in the nonlattice case.
\end{proof}

\begin{remark}\label{rem:upper-and-lower}
  One can use the explicit form of \eqref{eqn:xG}
  in the proof of Theorem~\ref{thm:Mink-meas} to obtain more information on the periodic function \xG. In particular, the methods of \cite[\S8 and \S10]{FGCD} allow one to show that \xG is bounded away from 0 and from \iy, and hence that $0 < \Minkl(\tiling) < \Minku(\tiling) < \iy$ in the lattice case. 
\end{remark}

\begin{remark}\label{rem:lattice-detail}
  Note that the proof of Theorem~\ref{thm:Mink-meas} in the lattice case is only given for a (monophase) tiling with a single generator. For the case of multiple generators, it is possible for cancellations to occur in the formula for \gzL, which results in the disappearance of some of the complex dimensions. Showing that \tiling is not Minkowski measurable in this case requires some care: one must check that after such cancellations, there still remains infinitely many complex dimensions on the line $\Re s = D$ (as is the case when $d=1$; cf. \cite[Thm.~8.25 and Cor.~8.27]{FGCD}). These technical issues are beyond the scope of the present paper but will be considered in \cite{Minko}, along with some possible counterexamples.
\end{remark}

\begin{remark}\label{rem:TFCD}
  The proof of Theorem~\ref{thm:Mink-meas} provides a detailed explanation of the argument behind \cite[Rem.~10.6]{TFCD}, while Theorem~\ref{thm:Mink-meas} itself justifies, completes, and strengthens \cite[Cor.~8.5]{TFCD}. While  \cite{TFCD} pertains to the \emph{distributional} context, the present (pointwise) result still applies (and is, in fact, stronger).
\end{remark}

\section{Minkowski measurability of self-similar fractals}
\label{sec:fractals-themselves}

  In \cite[Thm.~6.2]{GeometryOfSST}, precise conditions are given for when the (inner) parallel sets of the tiles in a self-similar \emph{tiling} can be used to decompose the parallel sets of the corresponding self-similar \emph{set}. 
  In this section, we study the Minkowski measurability of self-similar sets $F\subset\bR^d$ to which a self-similar tiling $\sT=\sT(O)$ with a monophase generator $G$ can be associated. The main geometric requirement needed to transfer the results obtained above for $\sT$ to the associated self-similar set $F$ is that $O$ can be chosen such that $\bd O\subset F$. 
Let us continue to write $K:=\cj{O}$ and $A_{-\ge}:=\{x\in A\suth d(x,A^c)\le \ge\}$ and $T := \bigcup_{R\in\sT} R$, and recall that $A_\ge$ (or $K_\ge$) is as defined in \eqref{eqn:A_ge}.

\begin{theorem}[Compatibility theorem {\cite[Thm.~6.2]{GeometryOfSST}}]\label{thm:compatibility}
  For the inner parallel set of an open set $A\subset\bR^d$, one has the disjoint decomposition
  \linenopax
  \begin{align}\label{eqn:disjoint-decomposition}
    \attr_\ge\setminus \attr = 
    T_{-\ge} \cup \left(K_\ge \setminus K\right),
    \qq\text{for all }\; \ge \geq 0,
  \end{align}
  if and only if the following compatibility condition is satisfied:
  \linenopax
  \begin{align}\label{eqn:compatibility-condition}
    \bd K \ci F.
  \end{align}
\end{theorem}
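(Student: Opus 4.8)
The plan is to reduce the set identity \eqref{eqn:disjoint-decomposition} to elementary point-set and distance arguments plus one structural fact about self-similar tilings, and to establish the ``only if'' direction directly by contraposition. Write $K=\cj O$. First I would record the preliminaries. We have $F\subseteq K$ (since $\simt_n(O)\subseteq O$ makes $\simt^k(\cj O)$ decrease to $F$); $\cj T=K$ by \cite[Thm.~5.7]{GeometryOfSST}, so $\bd T=K\setminus T$ and $T\subseteq O\subseteq\inn K$; and $F\cap T=\es$, hence $F\subseteq\bd T$ (each tile $\simt_w(G_q)$ lies in the open set $\simt_w(O)$, which by the open set condition is disjoint from $\simt_v(\cj O)\supseteq\simt_v(F)$ for $v\neq w$, while $\simt_w(F)\cap\simt_w(G_q)=\simt_w(F\cap G_q)=\es$). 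Moreover $\bd K\subseteq\bd O\subseteq K\setminus T=\bd T$, and standard distance facts follow: for $x\in T\subseteq\inn K$ the nearest point of $K^c$ lies on $\bd K\subseteq\bd T$, so $\dist(x,T^c)=\min\{\dist(x,\bd T),\dist(x,\bd K)\}=\dist(x,\bd T)$; and for $x\notin K$ the nearest point of $K$ lies on $\bd K$.

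The crux, which I expect to be the main obstacle, is the structural equivalence $\bd K\subseteq F\iff\bd T=F$, i.e.\ $K=T\sqcup F$. One direction is trivial ($\bd K\subseteq\bd T$). For the other I would iterate the identity $\cj O=\bigcup_q\cj{G_q}\cup\simt(\cj O)$ (valid since $O\setminus\simt(\cj O)=\bigcup_q G_q$, $\simt(\cj O)$ is closed, and $|Q|<\iy$) to get $\cj O=\bigcup_{|w|<k}\bigcup_q\simt_w(\cj{G_q})\cup\simt^k(\cj O)$ for every $k$; letting $k\to\iy$ and using $\bigcap_k\simt^k(\cj O)=F$ gives $K\setminus F\subseteq\bigcup_{w,q}\simt_w(\cj{G_q})=T\cup\bigcup_{w,q}\simt_w(\bd G_q)$. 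Since $\bd G_q\subseteq\bd O\cup\bd\,\simt(\cj O)\subseteq\bd O\cup\bigcup_n\simt_n(\bd K)$, the hypothesis $\bd K\subseteq F$ (together with $\bd O=\bd K$, which holds for the regular feasible open sets used in \cite{GeometryOfSST}) forces $\bd G_q\subseteq F$, hence $\simt_w(\bd G_q)\subseteq\simt_w(F)\subseteq F$, hence $K=T\cup F$, the union being disjoint by the preliminaries. This is the one place where the detailed geometry of \cite{GeometryOfSST} is genuinely needed; everything else is soft.

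Granting this, the ``if'' direction is routine. Assume $\bd K\subseteq F$, so $K=T\sqcup F$ and $\bd T=F$. If $x\in T_{-\ge}$, then $x\in T$, so $x\notin F$, and $\dist(x,F)=\dist(x,\bd T)=\dist(x,T^c)\leq\ge$; if $x\in K_\ge\setminus K$, then $x\notin K\supseteq F$ and $\dist(x,F)\leq\dist(x,\bd K)=\dist(x,K)\leq\ge$. In both cases $x\in F_\ge\setminus F$, giving ``$\supseteq$''. Conversely, let $x\in F_\ge\setminus F$: if $x\in K$ then $x\in K\setminus F=T$ and $\dist(x,T^c)=\dist(x,F)\leq\ge$, so $x\in T_{-\ge}$; if $x\notin K$ then $\dist(x,K)\leq\dist(x,F)\leq\ge$, so $x\in K_\ge\setminus K$. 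This gives ``$\subseteq$'', and the union on the right is automatically disjoint since $T_{-\ge}\subseteq\inn K$ while $K_\ge\setminus K\subseteq K^c$.

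Finally, the ``only if'' direction, by contraposition and without using any tiling structure: suppose $p\in\bd K\setminus F$, and set $\rho:=\dist(p,F)$, which is positive and finite as $F$ is compact and $p\notin F$; put $\ge:=\rho/2$. Since $p\in\bd K$, pick $x\notin K$ with $|x-p|<\rho/2$. Then $\dist(x,K)\leq|x-p|<\ge$ and $x\notin K$, so $x\in K_\ge\setminus K$; but $\dist(x,F)\geq\rho-|x-p|>\rho/2=\ge$, so $x\notin F_\ge\setminus F$. As $T_{-\ge}\subseteq K$ while $x\notin K$, the point $x$ lies in the right-hand side of \eqref{eqn:disjoint-decomposition} but not the left, so \eqref{eqn:disjoint-decomposition} fails for this $\ge$. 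Hence, if \eqref{eqn:disjoint-decomposition} holds for all $\ge\geq0$, then necessarily $\bd K\subseteq F$.
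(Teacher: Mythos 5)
The paper itself offers no proof of this statement: it is imported verbatim from \cite[Thm.~6.2]{GeometryOfSST}, so there is nothing internal to compare your argument against. Taken on its own terms, your architecture is the right one: reduce everything to the structural identity $K=T\sqcup F$ (equivalently $\bd T=F$), after which the parallel-set identity is elementary metric bookkeeping, and prove the ``only if'' direction by contraposition using only $\bd K\ci K=\cj{T}$. The preliminary facts ($F\ci K$, $\cj{T}=K$, $F\cap T=\es$, the distance identities) are all correct, as is the iteration of $\cj{O}=\bigcup_q\cj{G_q}\cup\simt(\cj{O})$ down to $K\setminus F\ci T\cup\bigcup_{w,q}\simt_w(\bd G_q)$.

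The one genuine soft spot is exactly the parenthesis you flagged: you need $\bd O\ci F$ to conclude $\bd G_q\ci F$, but the stated hypothesis only gives $\bd\cj{O}\ci F$, and $\bd O$ can be strictly larger than $\bd\cj{O}$ unless $O=\inn{\cj{O}}$. This is not cosmetic. If one punctures the generator, replacing a feasible open set $O$ by $O'=O\setminus\{p\}$ for a point $p\in G$ (still feasible, with $\cj{O'}=\cj{O}$ and generator $G\setminus\{p\}$), then $\bd\cj{O'}\ci F$ still holds but the decomposition fails for small $\ge$ at points of the new tile set near $p$; so the theorem as literally stated requires $O$ to be regular open, and your appeal to \cite{GeometryOfSST} is carrying real weight rather than being a formality. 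There is, however, a clean self-contained repair already latent in your own argument: your contrapositive step actually yields the stronger conclusion $\bd O\ci F$, not merely $\bd\cj{O}\ci F$ --- given $p\in\bd O\setminus F$, use $p\in\cj{O}=\cj{T}$ and $p\notin T$ to choose $x_n\in T$ with $\ge_n:=|x_n-p|\to 0$; then $x_n\in T_{-\ge_n}$ while $d(x_n,F)>\ge_n$ eventually, so the decomposition fails. Running your ``if'' direction from the hypothesis $\bd O\ci F$ (which is precisely how the present paper invokes the result in the proof of Theorem~\ref{thm:Minkowski-measurability-fractals}, via \eqref{eqn:compatibility}) then closes the equivalence with no regularity assumption, and the version stated with $\bd K$ follows whenever $O=\inn{\cj{O}}$. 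I would make that substitution explicit rather than leaving it to the cited source.
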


  In this case, a tube formula for the self-similar set \attr can be obtained simply by adding to $V(T,\ge)$ the (outer) parallel volume $\gl_d(K_\ge\setminus K)$ of $K$;
  see the examples in \cite[\S6]{Pointwise}. 

  Recall from \eqref{eqn:M-content} the definition of the ($\alpha$-dimensional) Minkowski content $\sM_\alpha(A)$ of a set $A\subset\bR^d$, and that $A$ is Minkowski measurable if and only if the number $\sM_\alpha(A)$ exists and is positive and finite. 
The relation \eqref{eqn:disjoint-decomposition} suggests that for the existence of the Minkowski content of $F$, it is not only the Minkowski content $\sM_D(\sT)$ of the tiling $\tiling$ which plays an important role. There is also a contribution
of the \emph{outer Minkowski content} $\bM_D(K)$ of the tiled set $K=\overline{O}$, which is defined as follows.

\begin{defn}\label{def:outer-Mink-content}
  Let $A\subset\bR^d$ and $\alpha\in[0,d]$. When the limit exists, the number
\linenopax
  \begin{align}\label{eqn:outer-Mink-content}
    \bM_\ga(A):=\lim_{\ge\to 0^+} \ge^{\ga-d} (\gl_d(A_\ge)-\gl_d(A))
  \end{align}
is called the \emph{outer \ga-dimensional Minkowski content} of $A$.  
\end{defn}
For sets $A$ with $\lambda_d(A)=0$ it obviously coincides with the usual Minkowski content.
(In general, $\bM_\alpha(A)$ is also equivalent to the relative Minkowski content of $A$ (or $\bd A$) relative to the set $A^c$, as discussed in \cite{Zubrinic2}, for example.)
Therefore, for the self-similar set $F$, it makes no difference whether we use the usual Minkowski content or its outer version. For the contribution of the set $K$, however, the outer Minkowski content is exactly the right notion.

Since Theorem~\ref{thm:Minkowski-measurability-fractals} requires the self-similar set \attr to satisfy the OSC and the hypotheses of Theorem~\ref{thm:Mink-meas}, it follows from this latter theorem (and Remark~\ref{rem:similarity-dimn}) that Theorem~\ref{thm:Minkowski-measurability-fractals} pertains to a situation where $D = \gd(\attr) = \dim_M \negsp[4]\attr = \dim_M \negsp[4]\tiling$.

\begin{remark}
  In the following theorem (Theorem~\ref{thm:Minkowski-measurability-fractals}), we assume that $\bM_\abscissa(K)$ exists with $0 \leq \bM_\abscissa(K) < \iy$. This is equivalent to the assumption that either $K$ is (outer) Minkowski measurable of dimension $D$ or that its $D$-dimensional outer Minkowski content vanishes; see \eqref{eqn:outer-Mink-content} and Remark~\ref{rem:fractal-boundaries}.
\end{remark}

\begin{theorem}[Minkowski measurability of self-similar fractals, monophase case]
  \label{thm:Minkowski-measurability-fractals}
Let $F$ be a self-similar set in $\bR^d$ which has Minkowski dimension $D \in (d-1,d)$ and satisfies the open set condition. %
Assume there exists a feasible open set $O$ for $F$ such that
\linenopax
\begin{align} \label{eqn:compatibility}
  \bd O \subset F,
\end{align}
and such that the associated tiling $\sT(O)$ has a single monophase generator, and assume that for the closure $K:=\overline{O}$ of $O$, the limit $\bM_\abscissa(K)$ 
exists and satisfies $0 \leq \bM_\abscissa(K) < \iy$. 
%

Then 
$F$ is Minkowski measurable if and only if $F$ is nonlattice. In this case, the Minkowski content of $F$ is given by 
\linenopax
\begin{align}\label{eqn:Mink-F}
  \Mink_\abscissa(F)=\Mink_\abscissa(\tiling)+\bM_\abscissa(K),
\end{align}
where 
$\Mink_\abscissa(\tiling)$ is the Minkowski content of \tiling, 
and both $\Mink_\abscissa(F)$ and $\Mink_\abscissa(\tiling)$ also lie in the open interval $(0,\iy)$.
In the lattice case, the Minkowski content of $F$ does not exist, but the average Minkowski content $\overline{\sM}_D(F)$ exists and is given by
\linenopax
\begin{align}\label{eqn:avMink-F}
  \Minka_\abscissa(F)=\Minka_\abscissa(\tiling)+\bM_\abscissa(K),
\end{align}
where $\Minka_\abscissa(\tiling)$ is the average Minkowski content as in \eqref{eqn:lattice-content}. Again, $0<\Minka_\abscissa(F)<\iy$.
\end{theorem}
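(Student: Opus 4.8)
The plan is to combine the geometric decomposition from Theorem~\ref{thm:compatibility} with the measurability result for tilings, Theorem~\ref{thm:Mink-meas}.

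First I would check that the compatibility decomposition is available. Since $O \subseteq \inn K$, one has $\bd K = K \setminus \inn K \subseteq K \setminus O = \bd O$, so hypothesis \eqref{eqn:compatibility} yields $\bd K \subseteq F$, which is precisely the compatibility condition \eqref{eqn:compatibility-condition}; hence Theorem~\ref{thm:compatibility} applies and gives the disjoint decomposition \eqref{eqn:disjoint-decomposition}, i.e.\ $F_\ge \setminus F = T_{-\ge} \cup (K_\ge \setminus K)$ for every $\ge \geq 0$. Next, because $\dim_M F = D < d$ and $F$ is bounded, $F$ has Lebesgue measure zero (a bounded set of positive Lebesgue measure has Minkowski dimension $d$; cf.\ also Proposition~\ref{cor:OSC-dimension-d-implies-trivial}). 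Consequently $\gl_d(F_\ge) = \gl_d(F_\ge \setminus F) = V(T,\ge) + \bigl(\gl_d(K_\ge) - \gl_d(K)\bigr)$, and after dividing by $\ge^{d-D}$,
\[
  \ge^{-(d-D)}\gl_d(F_\ge) = \ge^{-(d-D)}V(T,\ge) + \ge^{D-d}\bigl(\gl_d(K_\ge) - \gl_d(K)\bigr), \qquad \ge > 0 .
\]
The second summand on the right converges to $\bM_D(K) \in [0,\infty)$ as $\ge \to 0^+$ by hypothesis, and since $\gl_d(F) = 0$ the outer and ordinary Minkowski contents of $F$ coincide (as recalled before the theorem), so it suffices to study $\ge^{-(d-D)}\gl_d(F_\ge)$. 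Finally, note that the hypotheses of Theorem~\ref{thm:Mink-meas} are met: $\sT$ has a single monophase generator, and the abscissa $D$ of $\gz_\sL$ satisfies $d-1 < D$, since $D = \dim_M F$ by Remark~\ref{rem:similarity-dimn}.

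Now I would split into the two cases of Proposition~\ref{thm:lattice-nonlattice-dichotomy}, recalling that ``$F$ is (non)lattice'' is a condition on the scaling ratios $r_n$ and hence agrees with ``$\gz_\sL$ is (non)lattice''. In the nonlattice case, Theorem~\ref{thm:Mink-meas} gives $\ge^{-(d-D)}V(T,\ge) \to \Mink_D(\tiling) \in (0,\infty)$, so passing to the limit in the displayed identity shows that $F$ is Minkowski measurable with $\Mink_D(F) = \Mink_D(\tiling) + \bM_D(K)$ as in \eqref{eqn:Mink-F}; this lies in $(0,\infty)$ because $\Mink_D(\tiling) > 0$ and $\bM_D(K) \geq 0$, while both terms are finite. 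In the lattice case, Theorem~\ref{thm:Mink-meas} asserts that $\lim_{\ge \to 0^+}\ge^{-(d-D)}V(T,\ge)$ does not exist; since the $K$-term in the identity does converge, $\ge^{-(d-D)}\gl_d(F_\ge)$ cannot converge either, so $\Mink_D(F)$ does not exist and $F$ is not Minkowski measurable. Together these two cases prove the stated equivalence.

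For the average Minkowski content in the lattice case, I would divide the displayed identity by $\ge$ and integrate over $\ge \in (1/b, 1)$, then divide by $\log b$, as in \eqref{eqn:alpha-average-Minkowski-content}. The tiling term converges to $\Minka_D(\tiling)$, given by \eqref{eqn:lattice-content}, by Theorem~\ref{thm:Mink-meas}. For the $K$-term I would invoke the elementary fact that if $g(\ge) \to L$ as $\ge \to 0^+$, then $\frac{1}{\log b}\int_{1/b}^1 g(\ge)\,\frac{d\ge}{\ge} \to L$ as $b \to \infty$ (after the substitution $u = \log(1/\ge)$, this is just the convergence of the Cesàro means of the convergent function $u \mapsto g(e^{-u})$); applying it with $g(\ge) = \ge^{D-d}(\gl_d(K_\ge) - \gl_d(K))$ and $L = \bM_D(K)$ shows that the $K$-term contributes exactly $\bM_D(K)$. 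Hence $\Minka_D(F) = \Minka_D(\tiling) + \bM_D(K)$, as in \eqref{eqn:avMink-F}, and $0 < \Minka_D(F) < \infty$ because $0 < \Minka_D(\tiling) < \infty$ (Theorem~\ref{thm:Mink-meas}) and $0 \leq \bM_D(K) < \infty$ (hypothesis). The argument is essentially a routine combination of Theorems~\ref{thm:compatibility} and~\ref{thm:Mink-meas}; I do not anticipate a genuine obstacle, the only points needing a little care being the verification of $\bd K \subseteq F$ and $\gl_d(F) = 0$ (so that the compatibility decomposition applies) and the elementary averaging lemma used for the outer parallel volume of $K$ in the lattice case.
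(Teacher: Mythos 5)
Your proposal is correct and follows essentially the same route as the paper: apply the compatibility decomposition of Theorem~\ref{thm:compatibility}, use $\gl_d(F)=0$ to get $\gl_d(F_\ge)=V(T,\ge)+(\gl_d(K_\ge)-\gl_d(K))$, and then pass to (ordinary or averaged) limits using Theorem~\ref{thm:Mink-meas} for the tiling term and the hypothesis on $\bM_\abscissa(K)$ for the other. The only differences are that you spell out a few details the paper leaves implicit (the deduction $\bd K\subseteq\bd O\subseteq F$, the fact that $\gl_d(F)=0$ follows from $D<d$, and the Ces\`aro-mean lemma for the $K$-term in the lattice case), which is harmless.
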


\begin{proof}
The assumption $D<d$ ensures that a self-similar tiling $\tiling(O)$ exists for each  feasible set $O$ for the open set condition of $F$; see Section~\ref{sec:Self-similar-tilings} or \cite[Thm.~5.7]{GeometryOfSST}. 
Now fix some $O$ such that the hypotheses on $O$ and $G$ are satisfied.
By \cite[Thm.~6.2]{GeometryOfSST}, \eqref{eqn:compatibility} ensures that we have the disjoint decomposition \eqref{eqn:disjoint-decomposition}.
Since $\lambda_d(F)=0$, this yields the relation
\linenopax
\begin{align}\label{eqn:V(F_e)}
  \gl_d(F_\ge) = V(T,\ge) + (\gl_d(K_{\ge})-\gl_d(K))
\end{align}
for the volume of these sets. Multiplying \eqref{eqn:V(F_e)} by $\ge^{D-d}$ and taking the limits as $\ge\to 0^+$, one obtains
\linenopax
\begin{align}\label{eqn:lim-eV(F_e)}
  \lim_{\ge \to 0^+} \ge^{D-d} \gl_d(F_\ge)
  = \lim_{\ge \to 0^+} \ge^{D-d} V(T,\ge)
    + \lim_{\ge \to 0^+} \ge^{D-d} (\gl_d(K_{\ge})-\gl_d(K)).
\end{align}
On the right side of \eqref{eqn:lim-eV(F_e)}, the second limit is $\bM_\abscissa(K)$, which exists in $[0,\iy)$ by assumption, while the first limit is $\Mink_\abscissa(\tiling)$, provided this number exists. By Theorem~\ref{thm:Mink-meas}, 
this is the case exactly when the tiling (and thus $F$) is nonlattice. Hence the limit on the left side of \eqref{eqn:lim-eV(F_e)} (i.e., $\Mink_\abscissa(F)$) exists in (0,\iy) if and only if $F$ is nonlattice.
In particular, the set $F$ is not Minkowski measurable in the lattice case.
Formula \eqref{eqn:Mink-F} follows immediately from \eqref{eqn:lim-eV(F_e)} in the nonlattice case, while \eqref{eqn:avMink-F} follows similarly (in the lattice case) by comparing the corresponding average limits and noting that (as in the case $ \bM_D(K)$) an average limit exists whenever the corresponding limit exists, and then they both coincide.
%
This concludes the proof of Theorem~\ref{thm:Minkowski-measurability-fractals}.
\end{proof}

\begin{remark}\label{rem:nonlattice=nonlattice}
  As was alluded to in the proof of Theorem~\ref{thm:Minkowski-measurability-fractals}, the self-similar fractal \attr is nonlattice if and only if the corresponding self-similar tiling \tiling is nonlattice. Note that, as discussed in Remark~\ref{rem:lattice-detail}, the proof (and statement) given here covers only the case of a single generator; for full details, see \cite{Minko}.
  
  The fact that a self-similar fractal \attr in \bRd (which satisfies the open set condition) is Minkowski measurable if and only if it is nonlattice was conjectured in \cite[Conj.~3, p.163]{Lap:Dundee}. Theorem~\ref{thm:Minkowski-measurability-fractals} resolves this conjecture (under the further conditions specified by the hypotheses). For $d=1$, this was established in \cite[Thm.~8.23 and Thm.~8.36]{FGCD}. We refer the interested reader to \cite[Rem.~8.17 and Rem.~8.39]{FGCD} for further discussion of this conjecture, and about earlier related work in \cite{LaPo1,Lap:Dundee,Falconer95} when $d=1$, and to \cite[\S12.5]{FGCD} for $d \geq 2$.
\end{remark}

\begin{remark}\label{rem:fractal-boundaries}
Note that \eqref{eqn:compatibility} implies $D\geq d-1$, since the (Minkowski) dimension of the boundary of a nonempty and bounded open subset of \bRd is at least $d-1$; see \cite{Lap:FD}. So the hypothesis $D>d-1$ just excludes the equality case $D=d-1$. This is necessary in order to apply Theorem~\ref{thm:Mink-meas}.

It is worth noting that one has $\bM_\abscissa(K)=0$, in particular, for all
feasible sets $O$ with finite surface area.
Thus the corresponding condition in Theorem~\ref{thm:Minkowski-measurability-fractals} is not a restriction, provided $O$ can be chosen to have a nonfractal boundary.
Moreover, in case $\bM_\abscissa(K)=0$, the formulas \eqref{eqn:Mink-F} and \eqref{eqn:avMink-F} obviously simplify and the (average) Minkowski contents of the set $F$ and the associated tiling \tiling coincide.

In contrast, condition \eqref{eqn:compatibility} and the assumption that the generators are monophase impose serious restrictions on the class of sets covered by this result. To overcome the assumption of monophase generators, a suitable generalization of Theorem~\ref{thm:Mink-meas} is required which one might be able to derive from the general tube formulas obtained in \cite{Pointwise}; this issue will be examined in \cite{Minko}. Concerning the compatibility condition \eqref{eqn:compatibility}, there is a principal restriction on its validity. For certain sets, like Koch-type curves or totally disconnected sets, this condition is never satisfied; cf.~\cite[Prop.~6.3]{GeometryOfSST}. See also the top part of Figure~\ref{fig:examples} for a depiction of the self-similar tiling associated with the Koch curve.
 It was recently shown that a necessary and sufficient condition for the existence of a feasible set $O$ satisfying $\bd O\subset F$ is that the complement of $F$ has a bounded connected component, see \cite{PokW} and \cite[end of \S6]{GeometryOfSST}. This criterion can easily be checked and shows precisely the range of applicability and the limitations of the approach to study self-similar sets via self-similar tilings of feasible open sets.

Finally, we recall that as mentioned in Remark~\ref{rem:pluriphase}, it was recently shown in \cite{Kocak2} that any polytope in \bRd ($d \geq 1$) which admits an inscribed ball is monophase. Therefore, the monophase assumption in Theorem~\ref{thm:Mink-meas} and Theorem~\ref{thm:Minkowski-measurability-fractals} is satisfied under this condition on the generator. 
\end{remark}

\begin{exm}\label{exm:gasket}
  Consider the Sierpinski gasket \SG defined by the iterated function system 
  \linenopax
  \begin{align}\label{eqn:SG-ifs}
      \simt_1(z) := \tfrac12 z, \q
    \simt_2(z) := \tfrac12 z + \tfrac12, \q \text{and}\q
    \simt_3(z) := \tfrac12 z + \tfrac14(1+\ii\sqrt3).
  \end{align}
  These mappings have contraction ratios $r_n=\frac12$, for $n=1,2,3$, and we have $D=D_F=D_\tiling=\gs = \log_{2}3$ because the system \eqref{eqn:SG-ifs} satisfies the open set condition. The scaling zeta function is
\linenopax
\begin{align}
  \gzL(s) = \frac1{1 - 3 \cdot 2^{-s}},
\end{align}
and the set of scaling complex dimensions is
\linenopax
\begin{align}
  \DL = \{D + \ii n\per \suth n \in \bZ\}
  \qq \text{for } D=\log_23, \; \per=\tfrac{2\gp}{\log2}.
\end{align}
  Consider the associated tiling $\tiling(O)$, where $O$ is the interior of the convex hull of \SG; this self-similar tiling is depicted in the middle part of Figure~\ref{fig:examples}.
Then $O$ is a feasible open set for \SG, and $K=\cj{O}$ satisfies the compatibility condition \eqref{eqn:compatibility}, along with the other assumptions of Theorem~\ref{thm:Minkowski-measurability-fractals}. In particular, this tiling has a single monophase generator $\gen = O \less \bigcup_{n=1}^3 \simt_n(K)$, which is an equilateral triangle with inradius $\genir = \tfrac1{4\sqrt3}$.
  %
The tube formula for this tiling was computed in \cite[(6.29)]{Pe2} (see also \cite[\S9.3]{TFCD}) to be
\linenopax
\begin{align}
  V({\SG},\ge)
  &= \frac{\sqrt3}{16 \log2} \sum_{n \in \bZ}
    \left(-\frac1{D+\ii n\per} + \frac{2}{D-1+\ii n\per} - \frac{1}{D-2+\ii n\per}\right)\left(\frac{\ge}{\genir}\right)^{2-D-\ii n\per} \notag\\
  &\hstr[35]  + \frac{3^{3/2}}{2} \ge^2 - 3 \ge.
  \label{eqn:Sierpinski-tube-formula}
\end{align}
Note that this formula is exact, i.e., the sum is taken over all the complex dimensions and hence there is no error term as in Theorem~\ref{thm:Fractal-tube-formula-with-error-term}. See \cite[(9.12)]{TFCD} for a refined derivation of this formula; this self-similar tiling of the Sierpinski gasket is also discussed in \cite{GeometryOfSST, TFSST, Pointwise}.

Theorem~\ref{thm:Mink-meas} and Theorem~\ref{thm:Minkowski-measurability-fractals} imply that the Sierpinski gasket \attr is \emph{not} Minkowski measurable but that its average Minkowski content exists (in $(0,\infty)$) and is given by \eqref{eqn:lattice-content}. Applying \eqref{eqn:alpha-average-Minkowski-content} to \eqref{eqn:Sierpinski-tube-formula} directly would involve a certain amount of effort, but one can instead use Theorem~\ref{thm:Mink-meas} and Theorem~\ref{thm:Minkowski-measurability-fractals}. One can obtain the \nth[0] Fourier coefficient of \xG from \eqref{eqn:Sierpinski-tube-formula} by factoring $\ge^{2-D}$ out of the summation and extracting the term with $n=0$:
  \linenopax
  \begin{align}
    \frac{\sqrt3}{16 \log2} \left(-\frac{1}{D}+\frac{2}{D-1}-\frac{1}{D-2}\right)\left(4\sqrt3\right)^{2-D}
    &= \frac{2 \sqrt3^{1-D}}{3D(D-1)(2-D)\log2}.
    \label{eqn:SG-tiling-content}
  \end{align}
This is the mean value of \xG and hence the Minkowski content of \tiling; note that it is positive because $1<D<2$. Since Theorem~\ref{thm:Minkowski-measurability-fractals} applies and $\Minka_D(K)=0$ (because $D<2$), we conclude that the Sierpinski gasket \SG has average Minkowski content
  \linenopax
  \begin{align}\label{eqn:SG-content}
    \Minka_D(\SG) = \Minka_D(\tiling)
    = \frac{2 \sqrt3^{1-D}}{3D(D-1)(2-D)\log2}
    = 1.8125913503790578 \dots .
  \end{align}
  This value can also be derived in a different way: substituting the scaling ratios into \eqref{eqn:lattice-residues} yields
  \linenopax
  \begin{align}\label{eqn:res(D,SG)}
    \res[\abscissa]{\gzL(s)}
    = \frac1{\sum_{n=1}^3 \cdot \left(\frac12\right)^D \log2}
    = \frac1{3 \cdot \frac13 \log2}
    = \frac1{\log2},
  \end{align}
  and the inner tube formula for the (monophase) generator is
  \linenopax
  \begin{align}\label{eqn:Sierpinski-generator-tube-formula}
    V(\gen,\ge) =
    \begin{cases}
    \tfrac12\ge - \sqrt3\ge^2,
      & 0 \leq \ge \leq \tfrac1{4\sqrt3}, \\
    \tfrac{\sqrt3}{16},
      & \ge \geq \tfrac1{4\sqrt3},
    \end{cases}
  \end{align}
  so $\crv_0 = -3^{1/2}$ and $\crv_1 = \frac12$. With this, \eqref{eqn:avg-Mink(tiling)} becomes
  \linenopax
  \begin{align}\label{eqn:res(D,SG)}
    \frac{1}{(2-D)\log2}\left(\frac{g^{D}}{D}(2)(-3^{3/2})+\frac{g^{D-1}}{D-1}(2-1)\left(\frac{3}{2}\right)\right)
    = \frac{2 \sqrt3^{1-D}}{3D(D-1)(2-D)\log2},
  \end{align}
  in agreement with \eqref{eqn:SG-content}.
\end{exm} 

An entirely analogous example could be provided for the Sierpinski carpet, whose associated self-similar tiling is depicted in the bottom part of Figure~\ref{fig:examples}.

\subsection*{Acknowledgements}
The authors are grateful for the careful and insightful comments provided by a diligent referee.

\bibliographystyle{plain}
\bibliography{strings}

\begin{thebibliography}{10}

\bibitem{BandtNguyenRao}
Christoph Bandt, Nguyen~Viet Hung, and Hui Rao.
\newblock On the open set condition for self-similar fractals.
\newblock {\em Proc. Amer. Math. Soc.}, 134(5):1369--1374 (electronic), 2006.

\bibitem{BedFi}
Tim Bedford and Albert~M. Fisher.
\newblock Analogues of the {L}ebesgue density theorem for fractal sets of reals
  and integers.
\newblock {\em Proc.\ London Math.\ Soc.}, 64(3):95--124, 1992.

\bibitem{Berr1}
M.~V. Berry.
\newblock Distribution of modes in fractal resonators.
\newblock In {\em Structural {S}tability in {P}hysics ({P}roc. {I}nternat.
  {S}ymposia {A}ppl. {C}atastrophe {T}heory and {T}opological {C}oncepts in
  {P}hys., {I}nst. {I}nform. {S}ci., {U}niv. {T}\"ubingen, {T}\"ubingen,
  1978)}, volume~4 of {\em Springer Ser. Synergetics}, pages 51--53. Springer,
  Berlin, 1979.

\bibitem{Berr2}
M.~V. Berry.
\newblock Some geometric aspects of wave motion: wavefront dislocations,
  diffraction catastrophes, diffractals.
\newblock In {\em Geometry of the {L}aplace {O}perator ({P}roc. {S}ympos.
  {P}ure {M}ath., {U}niv. {H}awaii, {H}onolulu, {H}awaii, 1979)}, Proc. Sympos.
  Pure Math., Vol. XXXVI, pages 13--28. Amer. Math. Soc., Providence, R.I.,
  1980.

\bibitem{BroCa}
Jean Brossard and Ren{\'e} Carmona.
\newblock Can one hear the dimension of a fractal?
\newblock {\em Comm. Math. Phys.}, 104(1):103--122, 1986.

\bibitem{Demir}
Bunyamin Demir, A.~Deniz, {\c{S}}ahin Ko{\c{c}}ak, and A.~E. {\"U}reyen.
\newblock Tube formulas for graph-directed fractals.
\newblock {\em Fractals}, 18(3):349--361, 2010.

\bibitem{Demir2}
Bunyamin Demir, {\c{S}}ahin Ko{\c{c}}ak, Y.~{\"O}zdemir, and A.~E. {\"U}reyen.
\newblock Tube formula for self-similar fractals with non-{S}teiner-like
  generators.
\newblock 2010.

\bibitem{Kocak1}
Ali Deniz, {\c{S}}ahin Ko{\c{c}}ak, Yuniz {\"O}zdemir, Andrei Ratiu, and A.~E.
  {\"U}reyen.
\newblock On the {M}inkowski measurability of self-similar fractals in
  $\mathbb{R}^d$.
\newblock \arxiv{1006.5883}.

\bibitem{Falconer95}
Kenneth~J. Falconer.
\newblock On the {M}inkowski measurability of fractals.
\newblock {\em Proc. Amer. Math. Soc.}, 123(4):1115--1124, 1995.

\bibitem{Falconer}
Kenneth~J. Falconer.
\newblock {\em Fractal {G}eometry: {M}athematical foundations and
  applications}.
\newblock John Wiley \& Sons Inc., Hoboken, NJ, 2nd edition, 2003.

\bibitem{FleVa}
Jacqueline Fleckinger-Pell{\'e} and Dmitri~G. Vassiliev.
\newblock An example of a two-term asymptotics for the ``counting function'' of
  a fractal drum.
\newblock {\em Trans. Amer. Math. Soc.}, 337(1):99--116, 1993.

\bibitem{Frantz}
Marc Frantz.
\newblock Lacunarity, {M}inkowski content, and self-similar sets in {$\Bbb R$}.
\newblock In {\em Fractal {G}eometry and {A}pplications: a jubilee of
  {B}eno\^\i t {M}andelbrot. {P}art 1}, volume~72 of {\em Proc. Sympos. Pure
  Math.}, pages 77--91. Amer. Math. Soc., Providence, RI, 2004.

\bibitem{Gatzouras}
Dimitris Gatzouras.
\newblock Lacunarity of self-similar and stochastically self-similar sets.
\newblock {\em Trans. Amer. Math. Soc.}, 352(5):1953--1983, 2000.

\bibitem{Hut}
John~E. Hutchinson.
\newblock Fractals and self-similarity.
\newblock {\em Indiana Univ. Math. J.}, 30(5):713--747, 1981.

\bibitem{Kocak2}
{\c{S}}ahin Ko{\c{c}}ak and A.~E. {\"U}reyen.
\newblock Inner tube formulas for polytopes.
\newblock {\em Proc. Amer. Math. Soc.}, 140(3):999--1010, 2012.
\newblock \arxiv{1008.2040}.

\bibitem{Lap:FD}
Michel~L. Lapidus.
\newblock Fractal drum, inverse spectral problems for elliptic operators and a
  partial resolution of the {W}eyl-{B}erry conjecture.
\newblock {\em Trans. Amer. Math. Soc.}, 325(2):465--529, 1991.

\bibitem{Lap:Dundee}
Michel~L. Lapidus.
\newblock Vibrations of fractal drums, the {R}iemann hypothesis, waves in
  fractal media and the {W}eyl-{B}erry conjecture.
\newblock In {\em Ordinary and {P}artial {D}ifferential {E}quations, {V}ol.\
  {IV} ({D}undee, 1992)}, volume 289 of {\em Pitman Res. Notes Math. Ser.},
  pages 126--209. Longman Sci. Tech., Harlow, 1993.

\bibitem{LaMa}
Michel~L. Lapidus and Helmut Maier.
\newblock The {R}iemann hypothesis and inverse spectral problems for fractal
  strings.
\newblock {\em J. London Math. Soc. (2)}, 52(1):15--34, 1995.

\bibitem{TFSST}
Michel~L. Lapidus and Erin P.~J. Pearse.
\newblock Tube formulas for self-similar fractals.
\newblock In {\em Analysis on {G}raphs and its {A}pplications}, volume~77 of
  {\em Proc. Sympos. Pure Math.}, pages 211--230. Amer. Math. Soc., Providence,
  RI, 2008.
\newblock \arxiv{0711.0173}.

\bibitem{TFCD}
Michel~L. Lapidus and Erin P.~J. Pearse.
\newblock Tube formulas and complex dimensions of self-similar tilings.
\newblock {\em Acta Appl. Math.}, 112:91--137, 2010.
\newblock \arxiv{math/0605527}.

\bibitem{Pointwise}
Michel~L. Lapidus, Erin P.~J. Pearse, and Steffen Winter.
\newblock Pointwise tube formulas for fractal sprays and self-similar tilings
  with arbitrary generators.
\newblock {\em Advances in Mathematics}, 227:1349--1398, 2011.
\newblock \arxiv{1006.3807}.

\bibitem{Minko}
Michel~L. Lapidus, Erin P.~J. Pearse, and Steffen Winter.
\newblock {M}inkowski measurability of fractal sprays and self-similar tilings.
\newblock 2012.
\newblock Work in progress.

\bibitem{LaPo1}
Michel~L. Lapidus and Carl Pomerance.
\newblock The {R}iemann zeta-function and the one-dimensional {W}eyl-{B}erry
  conjecture for fractal drums.
\newblock {\em Proc. London Math. Soc. (3)}, 66(1):41--69, 1993.

\bibitem{LaPo2}
Michel~L. Lapidus and Carl Pomerance.
\newblock Counterexamples to the modified {W}eyl-{B}erry conjecture on fractal
  drums.
\newblock {\em Math. Proc. Cambridge Philos. Soc.}, 119(1):167--178, 1996.

\bibitem{DistanceZeta}
Michel~L. Lapidus, Goran Radunovi\'c, and Darko \v{Z}ubrini\'c.
\newblock Zeta functions associated with arbitrary fractals and compact sets in
  {E}uclidean spaces.
\newblock 2012.
\newblock Preprint.

\bibitem{Box-countingZeta}
Michel~L. Lapidus, John~A. Rock, and Darko \v{Z}ubrini\'c.
\newblock Box-counting fractal strings, zeta functions, and equivalent forms of
  {M}inkowski dimension.
\newblock In {\em Fractal Geometry and Dynamical Systems in Pure and Applied
  Mathematics}, Contemporary Mathematics. Amer. Math. Soc., Providence, RI,
  2013.
\newblock Appears elsewhere in this volume. \arxiv{1207.6681}.

\bibitem{FGCD}
Michel~L. Lapidus and Machiel van Frankenhuijsen.
\newblock {\em Fractal {G}eometry, {C}omplex {D}imensions and {Z}eta
  {F}unctions: {G}eometry and spectra of fractal strings}.
\newblock Springer Monographs in Mathematics. Springer, New York, 2nd edition,
  2012.

\bibitem{FGNT1}
Michel~L. Lapidus and Machiel van Frankenhuysen.
\newblock {\em Fractal {G}eometry and {N}umber {T}heory: {C}omplex dimensions
  of fractal strings and zeros of zeta functions}.
\newblock Birkh\"auser, Boston, MA, 2000.
\newblock xii+268 pages.

\bibitem{Mandelbrot82}
Benoit~B. Mandelbrot.
\newblock {\em The Fractal Geometry of Nature}.
\newblock W. H. Freeman and Co., San Francisco, Calif., 1982.

\bibitem{Mandelbrot95}
Benoit~B. Mandelbrot.
\newblock Measures of fractal lacunarity: {M}inkowski content and alternatives.
\newblock In {\em Fractal geometry and stochastics ({F}insterbergen, 1994)},
  volume~37 of {\em Progr. Probab.}, pages 15--42. Birkh\"auser, Basel, 1995.

\bibitem{Pe2}
Erin P.~J. Pearse.
\newblock {\em Complex Dimensions of Self-Similar Systems}.
\newblock PhD thesis, University of California, Riverside, 2006.

\bibitem{SST}
Erin P.~J. Pearse.
\newblock Canonical self-affine tilings by iterated function systems.
\newblock {\em Indiana Univ. Math J.}, 56(6):3151--3169, 2007.
\newblock \arxiv{math/0606111}.

\bibitem{GeometryOfSST}
Erin P.~J. Pearse and Steffen Winter.
\newblock Geometry of canonical self-similar tilings.
\newblock {\em Rocky Mountain J. Math.}, 42(4):1327--1357, 2012.
\newblock \arxiv{0811.2187}.

\bibitem{PokW}
Du\v{s}an Pokorn\'y and Steffen Winter.
\newblock Scaling exponents of curvature measures.
\newblock {\em In preparation}, 2012.

\bibitem{RatajWinter2}
Jan Rataj and Steffen Winter.
\newblock Characterization of {M}inkowski measurability in terms of surface
  area.
\newblock {\em J. Math. Anal. Appl.}, 2012.
\newblock In press. \url{http://dx.doi.org/10.1016/j.jmaa.2012.10.059}.

\bibitem{Zubrinic2}
Darko {\v{Z}}ubrini{\'c}.
\newblock Analysis of {M}inkowski contents of fractal sets and applications.
\newblock {\em Real Anal. Exchange}, 31(2):315--354, 2005/2006.

\end{thebibliography}

\end{document}